\newtheorem{theorem}{Theorem}[section]
\newtheorem{proposition}{Proposition}[section]
\newtheorem{corollary}{Corollary}[section]
\begin{document}

\begin{center}
\Large

{\bf On varieties of rings whose finite rings are determined by their zero-divisor graphs}
\end{center}

\begin{center}
{\bf A.S. Kuzmina}

{\it Altai State Pedagogical Academy, 55, Molodezhnaya st.,\\
Barnaul, 656031, Russian\\akuzmina1@yandex.ru}
\end{center}

\begin{center}
{\bf Yu.N. Maltsev}

{\it Altai State Pedagogical Academy, 55, Molodezhnaya st.,\\
Barnaul, 656031, Russian\\maltsevyn@gmail.com}
\end{center}

\begin{abstract}
The zero-divisor graph $\Gamma(R)$ of an associative ring $R$ is the graph whose vertices are all nonzero zero-divisors (one-sided and two-sided)  of  $R$, and two distinct vertices $x$ and $y$ are joined by an edge iff either $xy=0$ or $yx=0$.

In the present paper, we study some properties of ring varieties where every finite ring is uniquely determined by its zero-divisor graph.

\vspace{0.2cm}

{\it Keywords: }{Zero-divisor graph; finite ring; variety of associative rings}

\vspace{0.2cm}

AMS Subject Classification: 16R10, 16P10

\end{abstract}

\section{Introduction}	

Throughout this paper, any ring $R$ is associative (not necessarily commutative).

The zero-divisor graph $\Gamma(R)$ of a ring $R$ is the graph whose vertices are all nonzero zero-divisors (one-sided and two-sided)  of  $R$, and two distinct vertices $x$ and $y$ are joined by an edge iff either $xy=0$ or $yx=0$ \cite{Redmond}.

The notion of the zero-divisor graph of a commutative ring was introduced by I.Beck in \cite{Beck}. In this paper, all elements of a ring are vertices of the graph. In \cite{AL}, D.F.Anderson and P.S.Livingston introduced the zero-divisor graph whose vertices are nonzero zero-divisors of a ring.  In  \cite{AL}, the authors studied the interplay between the ring-theoretic properties of a commutative ring $R$ with unity  and the graph-theory properties of $\Gamma(R)$. For a noncommutative ring the definition of zero-divisor graph was intoduced in \cite{Redmond}.

The question of when $\Gamma(R)\cong \Gamma(S)$ implies that $R\cong S$ is very interesting. For finite reduced rings and finite local rings this question has been investigated in \cite{AM-274}. (We note that ring $R$ is called \textit{reduced} if $R$ has no nonzero nilpotent elements.)  In this paper, we study varieties of rings, where  $\Gamma(R)\cong\Gamma(S)$ implies  $R\cong S$ for all finite rings $R,S$.  Note that some results concerning such varieties have been proved in~\cite{semr}.

\vspace{0.2cm}

Firstly, we fix some notations. Let $\mathbb{Z}$ be the set of integers, $\mathbb{N}$ the set of natural numbers, $\mathbb{Z}[x]$ the polynomial ring over $\mathbb{Z}$.   For each prime number $p$ by $GF(p^n)$ we denote the Galois field with $p^n$ elements. For each number $n$ let $\mathbb{Z}_{n}$ be the residue-class ring modulo $n$. The symbol $J(R)$ denotes the Jacobson radical of a ring $R$. We define a finite ring $R$ with unity to be an {\em local ring} if the factor-ring $R/J(R)$ is a field.

For each prime number $p$ let

$N_{0,p^n}=\left\langle a; a^2=0, p^na=0\right\rangle;~~$ $N_{p^2}=\left\langle a; a^2=pa,p^2a=0\right\rangle;$

$N_{p,p}=\left\{ \left( \begin{array}[c]{ccc} 0 & a & b \\ 0 & 0 & a \\ 0 & 0 & 0\end{array} \right) ; a, b \in GF(p)\right\};$ 

$A_{p}= \left( \begin{array}[c]{cc} GF(p) & GF(p)  \\ 0 & 0 \end{array} \right)$;  $A_{p}^0= \left( \begin{array}[c]{cc} GF(p) & 0  \\ GF(p) & 0 \end{array} \right)$. 

Let the additive group of a ring $R$ be a direct sum of its nonzero subgroups $A_i,$ $i=1,\ldots,n$ and $n\geq 2,$ i.e. $R=A_1\stackrel{.}{+}\ldots \stackrel{.}{+} A_n.$ If  $A_i$ is ideal of $R$ for all $i$, then we say that the ring $R$ is  {\it decomposable} and write ${R=A_1 \oplus \ldots \oplus A_n}$. 
A ring $R$ is called \textit{ subdirectly irreducible} if the intersection of all its nonzero ideals  is a nonzero ideal of $R$ \cite{Jac}. It is known that every ring is a subdirect sum of subdirectly irreducible rings \cite{Jac}. 
The ring of $n\times n$ matrices over a ring $R$ is denoted by $M_n(R).$ For all elements $x,y$ of a ring $R$ we put $[x,y]=xy-yx$.  

For every set $X=\{x_1,x_2,\ldots\}$ let $\mathbb{Z}\left\langle X\right\rangle=\mathbb{Z}\left\langle x_1,x_2,\ldots\right\rangle$ be the free associative ring freely generated by the set $X$. For every $f(x_1,\ldots,x_d)\in \mathbb{Z}\left\langle X\right\rangle$ the number $$min\{deg(h)~\left|\mbox{~all nonzero monomials~} h \mbox{~of~} f\right.\}$$  is called {\it the lower degree} of the polynomial $f(x_1,\ldots,x_d)$. We say that an polynomial $f(x_1,\ldots,x_d)$  is {\it essentially depending } on $x_1,x_2,\ldots, x_d$ if  $f(0,x_2,\ldots,x_d)=\ldots=$ $=~f(x_1,\ldots,x_{d-1},0)=0. $

Let $\mathfrak{M}$ be a variety of associative rings. We denote by $T(\mathfrak{M})$ the $T$-ideal of all polynomial identities of $\mathfrak{M}.$ For a set $\{f_i\left| i\in I\right.\}\subseteq \mathbb{Z}\left\langle X\right\rangle$ by $\{f_i~~|~~i\in I\}^T$ denote the smallest $T$-ideal containing all $f_i.$  Also, let $T(R)$ be the $T$-ideal of all polynomial identities satisfied by a ring $R.$ 

For all varieties $\mathfrak{M}$ and $\mathfrak{N}$ by $\mathfrak{M}\vee \mathfrak{N}$ denote the union of  these varieties. Note that $T\left(\mathfrak{M}\vee \mathfrak{N}\right)=T\left(\mathfrak{M}\right)\cap T \left(\mathfrak{N}\right).$ Put ${\mathfrak{L}_{p_1,\ldots, p_s}=var~N_{0,p_1}\vee \ldots \vee var~N_{0,p_s},}$ where $p_1, \ldots, p_s$ are  prime numbers such that $p_i\neq p_j$ for $i\neq j$. 

A finite ring $R$ is called {\it critical}, if it does not belong to the variety generated by all its proper subrings and factor-rings \cite{LvovI}. We say that a variety $\mathfrak{M}$ is \textit{Cross} if the following conditions hold: (i)~ all rings of $\mathfrak{M}$ are locally finite; (ii)~ the set of all critical rings in $\mathfrak{M}$ is finite; (iii)~ $T(\mathfrak{M})$ has  a finite basis. By \cite{LvovI}, a variety of associative rings is Cross if and only if  it is generated by a finite associative ring. 

In this article, $K_n$ will denote the complete graph on $n$ vertices.

\vspace{0.2cm}

The aim of this paper is to prove the following theorems.

\begin{theorem}\label{th1} Suppose $\mathfrak{M}$ is a variety of associative rings such  that ${xy+f(x,y)\in T(\mathfrak{M})}$, where the lower degree of $f(x,y)$ is greater then $2$. Then $\Gamma(R)\cong\Gamma(S)$ implies $R\cong S$ for all finite rings $R,S\in \mathfrak{M}$  if and only if $\mathfrak{M}\subseteq \mathfrak{L}_{p_1,\ldots, p_s}\vee var~\mathbb{Z}_p$ where $p, p_1, \ldots, p_s$ are prime numbers and $(p_i,p)\neq (3,2)$ for $i\leq s.$\end{theorem}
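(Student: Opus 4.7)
The plan is to prove both implications by first using the hypothesis $xy+f(x,y)\in T(\mathfrak{M})$ with $f$ of lower degree $>2$ to drastically restrict which finite subdirectly irreducible rings can lie in $\mathfrak{M}$. Direct substitution shows that among the standard list of finite critical rings ($\mathbb{Z}_{p^n}$, $GF(p^n)$, $N_{0,p^n}$, $N_{p^2}$, $N_{p,p}$, $A_p$, $A_p^0$, and matrix rings $M_n(GF(q))$ with $n\ge 2$), only the fields $\mathbb{Z}_p$ and the null-squared rings $N_{0,p}$ can satisfy such an identity. For example, in $N_{p^2}$ one has $a^k=p^{k-1}a=0$ for $k\ge 3$, so every monomial of degree $\ge 3$ in $f(a,a)$ vanishes, while $a^2=pa\ne 0$, contradicting $a^2+f(a,a)=0$. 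Analogous element-level checks rule out the remaining cases. Since any finite ring is a subdirect product of finite subdirectly irreducible rings, every finite $R\in\mathfrak{M}$ is a subdirect product of rings from $\{\mathbb{Z}_p, N_{0,q}\}$; a further prime-comparison argument restricts all field summands to a single prime $p$, yielding a decomposition $R\cong(\mathbb{Z}_p)^k\oplus N_R$ with $N_R^2=0$ and the characteristic of $N_R$ a squarefree product of some primes $p_1,\ldots,p_s$.

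For the sufficiency direction ($\Leftarrow$), given $R,S\in\mathfrak{M}$ with $\Gamma(R)\cong\Gamma(S)$, I reconstruct $R$ from $\Gamma(R)$ combinatorially. The vertices arising from $N_R$ form a clique because any two null-squared elements annihilate each other, while the vertices coming from $(\mathbb{Z}_p)^k$ are characterized by $x^2\ne 0$, which is detectable via neighborhood-closure tests in $\Gamma(R)$. Counting common-annihilator classes recovers both $p$ and $k$, and the bipartite structure between the nilpotent clique and the semisimple vertices encodes the prime factorization of $|N_R|$, yielding $R$ up to isomorphism.

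For the necessity direction ($\Rightarrow$), I argue by contraposition. If $\mathfrak{M}$ contains any critical ring outside $\{\mathbb{Z}_p, N_{0,q}\}$, the structural lemma already contradicts the hypothesis. If $\mathfrak{M}$ contains fields of two distinct characteristics, small direct-sum rings can be engineered to collide in zero-divisor graph while differing as rings. The most delicate obstruction, however, is the coincidence $(p_i,p)=(3,2)$: when both $\mathbb{Z}_2$ and $N_{0,3}$ lie in $\mathfrak{M}$, the non-isomorphic rings $\mathbb{Z}_2\oplus\mathbb{Z}_2$ and $N_{0,3}$ both have zero-divisor graph $K_2$, directly violating the reconstruction property. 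The main obstacle will be the sufficiency step: extracting from the abstract graph $\Gamma(R)$ enough combinatorial data to recover $p$, $k$, and the prime factorization of $|N_R|$ simultaneously, especially in highly symmetric cases such as $(\mathbb{Z}_p)^k$ for large $k$ or large direct sums of null-squared rings — the $(3,2)$ exception marks precisely the numerical boundary where this graph-theoretic reconstruction breaks down.
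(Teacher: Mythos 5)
Your proposal has genuine gaps in both directions. For the necessity direction, your entire structural analysis rests on checking the identity $xy+f(x,y)$ against a ``standard list of finite critical rings'' ($\mathbb{Z}_{p^n}$, $GF(p^n)$, $N_{0,p^n}$, $N_{p^2}$, $N_{p,p}$, $A_p$, $A_p^0$, $M_n(GF(q))$). No such classification exists: finite subdirectly irreducible and critical rings form a vast, unclassified family, so ``analogous element-level checks rule out the remaining cases'' is not an argument. The paper's proof has to work for an arbitrary critical ring $R\in\mathfrak{M}$, and the hard case is precisely the one your list skips: $(0)\neq J(R)\neq R$. There the paper lifts an idempotent $e$ modulo $J(R)$, applies Wilson's structure theorem to $eRe$, uses Proposition~\ref{pr2}(3) and Corollary~\ref{c2} to force every simple summand to be $\mathbb{Z}_q$ and to exclude the mixed $\mathbb{Z}_q$-algebras, and only then concludes Case~3 is impossible. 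Separately, even if you knew the structure of every \emph{finite} ring in $\mathfrak{M}$, that alone does not give the variety containment $\mathfrak{M}\subseteq \mathfrak{L}_{p_1,\ldots,p_s}\vee var~\mathbb{Z}_p$: you need $\mathfrak{M}$ to be generated by its finite (critical) rings. The paper gets this from Lvov's theorem (the identities $mx$ and $q_1\cdots q_l x + x^2g(x)$ from Propositions~\ref{pr7}--\ref{pr9} make $\mathfrak{M}$ a Cross variety); your proposal never addresses why infinite rings in $\mathfrak{M}$ are controlled.

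For the sufficiency direction, you attempt to rebuild the graph-reconstruction argument from scratch (``neighborhood-closure tests,'' ``counting common-annihilator classes''), and you yourself flag it as the main obstacle. This is exactly the content of Proposition~\ref{pr4}, which is available as a citable auxiliary result (proved in \cite{semr}); the paper's sufficiency proof is one line invoking it. As written, your reconstruction is a sketch, not a proof: you never show how the graph distinguishes, say, large direct sums $(\mathbb{Z}_p)^k\oplus N$ with different parameters, and the clique/independent-set dichotomy you describe does not by itself recover the ring. The one piece you do get right is the $(3,2)$ obstruction: $\Gamma(\mathbb{Z}_2\oplus\mathbb{Z}_2)=K_2=\Gamma(N_{0,3})$, matching Corollary~\ref{c1}. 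But that single observation cannot carry either direction on its own.
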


\begin{theorem} \label{th2}Let $\mathfrak{M}$ be a variety of associative rings such that $\mathbb{Z}_p\in \mathfrak{M}$ for some prime number $p$. Suppose  $\Gamma(R)\cong\Gamma(S)$  implies $R\cong S$ for all finite rings $R,S\in \mathfrak{M}$. Then any subdirectly irreducible finite ring $A\in \mathfrak{M}$ satisfies one of the following conditions:

(1) $A\cong\mathbb{Z}_p;$

(2) $A$ is a nilpotent ring and $q^2A=(0)$ for some prime number $q$.\end{theorem}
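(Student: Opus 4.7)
The plan is to argue by contrapositive. Assume $A \in \mathfrak{M}$ is a finite subdirectly irreducible ring that is neither isomorphic to $\mathbb{Z}_p$ nor a nilpotent ring with $q^2 A = (0)$ for some prime $q$; the goal is to exhibit two non-isomorphic finite rings $R, S \in \mathfrak{M}$ with $\Gamma(R) \cong \Gamma(S)$, contradicting the graph-determination hypothesis.

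First I would pin down the additive structure of $A$. Because $A$ is finite and subdirectly irreducible, its additive exponent must be a prime power $q^k$: if the exponent were $mn$ with $(m,n) = 1$, then $mA$ and $nA$ would be proper ideals whose intersection is zero, contradicting subdirect irreducibility. Under this reduction, the failure of condition (2) splits into two sub-cases: (I) $A$ is nilpotent with $k \geq 3$, or (II) $A$ is not nilpotent and $A \not\cong \mathbb{Z}_p$.

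The cleanest sub-case is (I) with $A^2 = 0$, and it serves as the template for the others. In a zero ring the ideals coincide with the additive subgroups, so subdirect irreducibility forces the additive group of $A$ to be the indecomposable cyclic group $\mathbb{Z}_{q^k}$, giving $A \cong N_{0,q^k}$. The additive subgroups $q^{k-2}A$ and $q^2 A$ are subrings of $A$ of orders $q^2$ and $q^{k-2}$ respectively and carry trivial multiplication, hence are isomorphic to $N_{0,q^2}$ and $N_{0,q^{k-2}}$. Setting $R = A$ and $S = N_{0,q^{k-2}} \oplus N_{0,q^2}$, both rings belong to $\mathfrak{M}$ by closure under subrings and finite direct sums; both have order $q^k$ and are zero rings, so $\Gamma(R) \cong \Gamma(S) \cong K_{q^k - 1}$; yet their additive groups $\mathbb{Z}_{q^k}$ and $\mathbb{Z}_{q^{k-2}} \oplus \mathbb{Z}_{q^2}$ are non-isomorphic as soon as $k \geq 3$, producing the required contradiction.

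The residual sub-cases, namely $A$ nilpotent with $A^2 \neq 0$ in (I) and $A$ non-nilpotent in (II), are substantially more delicate and constitute the main obstacle of the proof. The intended philosophy in each is the same: replace a suitable ideal or subring of $A$ by a zero-ring of the same additive order drawn from $\mathfrak{M}$, using the availability of $\mathbb{Z}_p \in \mathfrak{M}$ (and its additive sub-zero-ring of exponent $p$) as a source of filler summands. Carrying this out requires invoking the structure theory of finite subdirectly irreducible rings: in (II) one splits $A$ into the simple case ($GF(q^m)$ with $(q,m) \neq (p,1)$ or $M_n(GF(q^m))$ with $n \geq 2$) and the local case with non-trivial Jacobson radical containing the heart, and in each sub-case one produces the comparison ring. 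The subtle point is that outside the zero-ring setting the adjacency pattern of $\Gamma$ depends on the full multiplicative structure, so the replacement summand must be chosen so as to preserve every annihilator relation, not merely cardinalities and products—this is where explicit analysis of the Peirce decomposition of $A$ and of the action of its idempotents on $J(A)$ becomes indispensable.
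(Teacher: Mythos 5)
There is a genuine gap: your proof establishes only the easiest sub-case, and the two cases you explicitly defer --- $A$ nilpotent with $A^2\neq 0$, and $A$ non-nilpotent --- are precisely the substance of the theorem. The part you do carry out (comparing $N_{0,q^k}$ with $N_{0,q^{k-2}}\oplus N_{0,q^2}$, both zero rings of order $q^k$ with graph $K_{q^k-1}$) is correct, but it is essentially a restatement of Proposition~\ref{pr2}(4), which the paper already quotes from~\cite{semr}; it never touches the multiplicatively nontrivial situations. Your final paragraph is a statement of intent, not an argument: the ``replacement philosophy'' of swapping a subring for a zero ring of the same order has no obvious execution once multiplication is nontrivial, and you concede this yourself when you note that the replacement must preserve every annihilator relation. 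In the decisive non-nilpotent case with $0\neq J(A)\neq A$ there is simply no candidate summand to swap, so the contradiction cannot be produced this way; one needs structural input (an idempotent lifting the unity of $A/J(A)$, the Peirce decomposition, Wilson's theorem~\cite{W}) together with the exclusions $A_q, A_q^0\notin\mathfrak{M}$ (Corollary~\ref{c2}) and ``every local ring in $\mathfrak{M}$ is a field'' (Proposition~\ref{pr2}(3)).

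For contrast, the paper's own proof is short because the hard work is packaged elsewhere: the dichotomy ``$A\cong\mathbb{Z}_p$ or $A=J(A)$'' is quoted as the main theorem of~\cite{semr}, and the bound $q^2A=(0)$ for nilpotent $A$ comes from Proposition~\ref{pr9}, whose proof is identity-theoretic rather than constructive: from $N_{0,q^2}\notin\mathfrak{M}$ one extracts identities $mx=0$ and $qx+x^2g(x)=0$ in $T(\mathfrak{M})$ (Propositions~\ref{pr7} and~\ref{pr8}), and for a nilpotent element $a$ the substitution $x\mapsto qa$ yields $q^2a\bigl(1+ag_2(a)\bigr)=0$, forcing $q^2a=0$. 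Your nilpotent sub-case with $A^2\neq 0$ could in fact be finished by elementary means without this machinery (given an element of additive order $q^k$, $k\geq 3$, repeated squaring eventually produces an element $b$ of additive order $q^2$ with $b^2=0$, i.e.\ a subring $N_{0,q^2}$, after which your own complete-graph comparison applies), but no such argument appears in your text; and the non-nilpotent case admits no fix of this kind, so the proposal as written does not prove the theorem.
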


\begin{theorem}\label{th3} Let $\mathfrak{M}$ be a variety of associative rings such  that $\mathbb{Z}_2\in \mathfrak{M}$ and $\Gamma(R)\cong\Gamma(S)$ implies $R\cong S$ for all finite rings $R,S\in \mathfrak{M}.$ Then any subdirectly irreducible finite ring $A\in \mathfrak{M}$ of order $2^t$ ($t>0$) satisfies one of the following conditions:

(1) $A\cong\mathbb{Z}_2;$

(2) $A$ is a nilpotent commutative ring and $2x=0,$ $x^2=0$ for each $x\in A.$ \end{theorem}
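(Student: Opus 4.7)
My plan is to build on Theorem~\ref{th2} by successively ruling out the remaining structural possibilities. First, specializing Theorem~\ref{th2} to the case $|A|=2^t$: a subdirectly irreducible finite ring $A\in\mathfrak{M}$ is either isomorphic to $\mathbb{Z}_p$ for some prime~$p$, which under the order constraint forces $A\cong\mathbb{Z}_2$ and yields conclusion~(1), or else $A$ is nilpotent with $q^2A=(0)$ for some prime~$q$; since $|A|=2^t$, we must have $q=2$ and hence $4A=(0)$. It therefore remains to strengthen the nilpotent case to the full conclusion~(2) by proving three additional facts: (a)~$2A=(0)$, (b)~$A$ is commutative, and (c)~$x^2=0$ for every $x\in A$.

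Each of (a)--(c) I would prove by contradiction, using the same template. Suppose $A\in\mathfrak{M}$ violates the relevant property. Since $\mathfrak{M}$ is closed under subrings and homomorphic images, $\mathfrak{M}$ contains a small concrete witness $B$ extracted from $A$ that already exhibits the failure: for~(a), $B$ is a cyclic subring containing $N_{0,4}$ or a copy of $N_{2^2}$; for~(b), $B$ is a $2$-generated nilpotent subquotient containing one of $N_{2,2}$, $A_2$, or $A_2^0$; for~(c), $B$ is the subring generated by a single $x_0$ with $x_0^2\neq 0$, which after passing to a suitable quotient contains a copy of $GF(2)[t]/(t^3)$. Using $B$ together with $\mathbb{Z}_2\in\mathfrak{M}$, one then constructs two finite rings $R_1,R_2\in\mathfrak{M}$ — typically $B$ direct-summed with a number of copies of $\mathbb{Z}_2$ on one side, and a carefully chosen square-zero direct sum of copies of $\mathbb{Z}_2$ and $N_{0,2}$ of the same total order on the other side — such that $R_1\not\cong R_2$ while $\Gamma(R_1)\cong\Gamma(R_2)$, contradicting the hypothesis of the theorem.

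The main obstacle is the explicit graph-isomorphism verification in each of the three cases: one must match nonzero zero-divisors of $R_1$ and $R_2$ so that both left and right annihilators are preserved, and in particular must show that the "annihilator profile" of $B$ can always be reproduced inside a commutative square-zero ring of the same cardinality assembled from $\mathbb{Z}_2$'s and $N_{0,2}$'s. A secondary subtlety is ensuring that the witness $B$ and the two constructions $R_1,R_2$ all actually lie in~$\mathfrak{M}$; this follows from $\mathfrak{M}$ being a variety containing both $A$ and $\mathbb{Z}_2$, so that any subring, homomorphic image, or direct sum assembled from these objects remains in~$\mathfrak{M}$.
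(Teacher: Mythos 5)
There is a genuine gap, and it sits in your step~(a). Your claimed witness extraction is false: a finite nilpotent ring of characteristic~$4$ need not contain $N_{0,4}$ or $N_{2^2}$ as a subring, a homomorphic image, or any iterated subquotient — indeed neither ring need lie in the variety it generates. Concretely, let $B$ be the ring with additive group $\mathbb{Z}_4a\stackrel{.}{+}\mathbb{Z}_2b$ and multiplication $a^2=b$, $ab=ba=2a$, $b^2=0$ (equivalently $B=\mathbb{Z}[t]t/(4t,\,2t^2,\,t^3-2t)$). Then $B$ is commutative, nilpotent ($B^4=(0)$), of order~$8$ and characteristic~$4$, and it is even subdirectly irreducible, with unique minimal ideal $\{0,2a\}$. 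For every $x=\alpha a+\beta b\in B$ one computes $x^2=\alpha^2b$ and $x^3=2\alpha^3a=2\alpha a=2x$, so $x^3-2x$ is a polynomial identity of $B$. But $x^3-2x$ fails in $N_{0,4}$ (there $a^3-2a=-2a\neq 0$) and in $N_{2^2}$ (there $a^3=0$, so $a^3-2a=2a\neq 0$); hence neither ring belongs to $var~B$, and no construction from subrings, quotients and direct sums of $B$ and $\mathbb{Z}_2$ can ever produce them. Such a $B$ is exactly the critical test case for this theorem: it is permitted by Theorem~\ref{th2} (nilpotent, $4B=(0)$), so ruling it out is precisely the content Theorem~\ref{th3} adds, and your argument produces no contradiction from it. This is why the paper does not look for $N_{0,4}$ or $N_{2^2}$ inside $A$ at all: it shows $N_{2^2}\notin\mathfrak{M}$ and $N_{2,2}\notin\mathfrak{M}$ by graph arguments (Proposition~\ref{pr5}, Corollary~\ref{c3}) and then exploits \emph{global} identities $f\in T(\mathfrak{M})\setminus T(N_{2^2})$ in at most two essential variables, valid on all of $\mathfrak{M}$, manipulating them together with Proposition~\ref{pr7} to force $x^2=0$ and $2x=0$ on the subdirectly irreducible ring.

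Your witness strategy can be repaired, but only by reordering the steps and enlarging the witness list. One must prove (c) \emph{first}: if $x_0^2\neq 0$, then passing from $\langle x_0\rangle$ to its quotient by the ideal generated by $x_0^3$ yields a subquotient isomorphic to $N_{0,4}$, to $N_{2^2}$, or to the characteristic-$2$ ring $T=\{0,t,t^2,t+t^2\}$ with $t^3=0$; the new witness $T$ is excluded because $\Gamma(T)$ and $\Gamma(\mathbb{Z}_2\oplus N_{0,2})$ are both paths on three vertices while $T\not\cong\mathbb{Z}_2\oplus N_{0,2}$, and $\mathbb{Z}_2\oplus N_{0,2}\in\mathfrak{M}$ whenever $T\in\mathfrak{M}$ (note $N_{0,2}\subseteq T$ and $\mathbb{Z}_2\in\mathfrak{M}$). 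In the example above, $T\cong B/\{0,2a\}$: the characteristic-$4$ violation in $B$ is detected only through a quotient violating (c), which is exactly why (c) must precede (a). Once $x^2\equiv 0$ is known, your (a) does work with the single witness $N_{0,4}$ (an element of additive order $4$ then generates $N_{0,4}$, excluded by Proposition~\ref{pr2}(4)), and (b) is then automatic from $0=(x+y)^2=xy+yx$ and characteristic~$2$ — so no witnesses are needed for commutativity at all, which is fortunate, since your proposed witnesses $A_2$ and $A_2^0$ contain nonzero idempotents and therefore can never occur inside a nilpotent ring.
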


Note that Theorem~\ref{th2} strengthens the main result of \cite{semr}.

\section{The auxiliary results}

To prove the main theorems, we need several supplementary results. Propositions~\ref{pr1}--\ref{pr5} were proved in~\cite{semr}. These statements will be used in what follows.

By Tarski's theorem (see \cite{Tarski}),  any nontrivial variety of rings contains either $var~\mathbb{Z}_p$, or  $var~N_{0,p},$ where $p$ is some prime number. The question of when ${\Gamma(R)\cong \Gamma(S)}$ implies  $R\cong S$ for all finite rings $R,S$ in $var~\mathbb{Z}_p$ ($var~N_{0,p}$) is interesting. The following proposition answers this question.

\begin{proposition}  \label{pr1} Suppose $\mathfrak{M}$ is either $var~\mathbb{Z}_p$, or  $var~N_{0,p},$ where $p$ is some prime number.  Then $\Gamma(R)\cong \Gamma(S)$ implies  $R\cong S$  for all finite rings $R,S\in \mathfrak{M}$ (see \cite{semr}).\end{proposition}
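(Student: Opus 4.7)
The plan is to describe the finite rings in each variety explicitly, and then recover $R$ from a single numerical invariant of $\Gamma(R)$.

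First I would identify the finite members. For $\mathfrak{M} = var~N_{0,p}$, the generator satisfies $xy \equiv 0$ and $px \equiv 0$, so every finite $R \in \mathfrak{M}$ has trivial multiplication and additive exponent dividing $p$; hence $R$ is an elementary abelian $p$-group of order $p^n$ equipped with zero multiplication, and so is determined up to isomorphism by $|R|$. For $\mathfrak{M} = var~\mathbb{Z}_p$, the identities $[x,y]$, $px$ and $x^p - x$ hold throughout, so a finite $R \in \mathfrak{M}$ is a commutative ring with $px = 0$ and $x^p = x$ for every $x$. Such a ring is von Neumann regular, and by the structure theorem for finite commutative rings it decomposes as a direct product of fields; each factor satisfies $x^p = x$ and therefore equals $\mathbb{Z}_p$. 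Hence $R \cong \mathbb{Z}_p^n$ for some $n \geq 0$.

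Next I would recover $n$ from $\Gamma(R)$. In the $var~N_{0,p}$ case every pair of distinct nonzero elements multiplies to zero, so $\Gamma(R)$ is the complete graph $K_{p^n - 1}$; consequently $\Gamma(R) \cong \Gamma(S)$ forces $p^n = p^m$, hence $n = m$ and $R \cong S$. In the $var~\mathbb{Z}_p$ case the nonzero zero-divisors of $\mathbb{Z}_p^n$ are exactly the nonzero tuples with at least one zero coordinate, giving vertex count $p^n - 1 - (p-1)^n$; a short estimate shows this is strictly increasing in $n$ for $n \geq 2$, so $\Gamma(R) \cong \Gamma(S)$ recovers $n$. (Alternatively, the clique number of $\Gamma(\mathbb{Z}_p^n)$ equals $n$, realized by the standard basis $e_1, \ldots, e_n$ and bounded above by $n$ because a clique corresponds to a family of pairwise disjoint nonempty subsets of $\{1, \ldots, n\}$.)

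The hard part will really be Step 1, specifically the claim that a finite commutative ring with $x^p = x$ is a product of copies of $\mathbb{Z}_p$; once this is established, the graph-theoretic content of Step 2 amounts to a vertex count or a clique computation. I would also tacitly assume that $\Gamma(R)$ and $\Gamma(S)$ are nonempty (equivalently, that $R$ and $S$ have nonzero zero-divisors), since otherwise the hypothesis is vacuous — e.g., the pair $R = 0$, $S = \mathbb{Z}_p$ in $var~\mathbb{Z}_p$ both have empty zero-divisor graph.
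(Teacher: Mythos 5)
You should first note that this paper contains no proof of Proposition~\ref{pr1} to compare against: the statement is imported verbatim from \cite{semr}, and only that reference contains the original argument. Judged on its own, your proof is correct and is essentially the natural, self-contained one. For $var~N_{0,p}$ the identities $xy=0$, $px=0$ force every finite member to be an elementary abelian $p$-group with zero multiplication, so $\Gamma(R)=K_{|R|-1}$ and the order, hence the ring, is read off from the graph. For $var~\mathbb{Z}_p$ the identities $[x,y]=0$, $px=0$, $x^p=x$ force every finite member to be $\mathbb{Z}_p^n$, and either of your invariants recovers $n$: the vertex count $p^n-1-(p-1)^n$ is in fact strictly increasing already for $n\geq 1$, since $f(n+1)-f(n)=p^n(p-1)-(p-1)^n(p-2)>0$, and your clique-number computation (pairwise disjoint nonempty supports) is also sound.

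Two details are worth tightening. First, your appeal to ``the structure theorem for finite commutative rings'' tacitly assumes a unity, which this paper does not build into its rings (and $var~\mathbb{Z}_p$ certainly has non-unital infinite members, e.g.\ infinite direct sums of $\mathbb{Z}_p$). The clean route avoiding this: $x^p=x$ makes $R$ reduced; the Jacobson radical of a finite ring is nilpotent, hence zero here; and a finite ring with zero radical is semisimple Artinian, in particular unital, so commutativity gives a product of finite fields, each satisfying $x^p=x$ and therefore equal to $\mathbb{Z}_p$. Second, the degenerate case you flag is genuine: the zero ring and $\mathbb{Z}_p$ both have empty zero-divisor graph, so the proposition is true only under the standing convention that the rings considered are nonzero; with that convention the empty graph pins down $R\cong\mathbb{Z}_p$ (respectively is impossible in $var~N_{0,p}$), and your counting arguments handle all remaining cases. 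Neither point is a flaw in the strategy --- both are one-line repairs --- so your argument stands as a complete proof of the proposition that the paper itself only cites.
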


\begin{proposition}\label{pr2} Let $\mathfrak{M}$ be a variety of rings such that $\Gamma(R)\cong \Gamma(S)$ implies  $R\cong S$  for all finite rings $R,S\in \mathfrak{M}$. Then the following conditions hold: \begin{enumerate}
	\item  if $\mathbb{Z}_p\in \mathfrak{M}$ for some prime number~$p$,  then $\mathfrak{M}$ does not contain any field with the exception of~$\mathbb{Z}_p$;
	\item either $x^t\in T(\mathfrak{M}),$ where $t>0,$ or $\mathbb{Z}_p\in \mathfrak{M}$ for some prime number $p$;
	 \item  if a local ring $R$ is in $\mathfrak{M}$, then it is a field;
	 \item if $n\geq 2$, then $N_{0,p^n}\notin \mathfrak{M}$ for each prime number~$p$ (see \cite{semr}).
\end{enumerate}\end{proposition}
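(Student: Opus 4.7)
I would prove the four assertions in the order (4), (2), (3), (1), since the underlying strategy — producing a ``twin'' ring $S\in\mathfrak{M}$ with $\Gamma(S)\cong\Gamma(R)$ but $S\not\cong R$ from any ``forbidden'' $R\in\mathfrak{M}$ — is cleanest to display in (4).

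For (4), note that $N_{0,p^n}$ is a null ring of additive order $p^n$, so $\Gamma(N_{0,p^n})\cong K_{p^n-1}$. If $N_{0,p^n}\in\mathfrak{M}$, then its quotient $N_{0,p}$ lies in $\mathfrak{M}$, and hence so does the $n$-fold direct sum $S = N_{0,p}\oplus\cdots\oplus N_{0,p}$. This $S$ is still null and still has $p^n$ elements, so $\Gamma(S)\cong K_{p^n-1}\cong\Gamma(N_{0,p^n})$; but $S\not\cong N_{0,p^n}$, as the additive groups $(\mathbb{Z}_p)^n$ and $\mathbb{Z}_{p^n}$ are non-isomorphic. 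This contradicts the hypothesis on $\mathfrak{M}$.

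For (2) I argue the contrapositive. If no $x^t$ lies in $T(\mathfrak{M})$, apply this to the relatively free commutative 1-generator $\mathfrak{M}$-ring $A$, which is a quotient of $\mathbb{Z}[x]$ and hence a finitely generated $\mathbb{Z}$-algebra: the generator $x$ is not nilpotent in $A$, so (by the Nullstellensatz for finitely generated $\mathbb{Z}$-algebras, where the Jacobson radical equals the nilradical and maximal residue fields are finite) some maximal ideal of $A$ omits $x$. The residue field there is a finite field lying in $\mathfrak{M}$, and its prime subring gives $\mathbb{Z}_p\in\mathfrak{M}$.

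Parts (3) and (1) are the substantive ones and together constitute the main obstacle. In both, I have to construct the twin $S$ and verify it is reachable from ingredients already known to be in $\mathfrak{M}$ using only subring, quotient, and direct product. For (3), given a local $R\in\mathfrak{M}$ with $J = J(R)\neq 0$ and residue field $F = R/J$, I would build $S$ out of $F$ together with a null ring of the same additive type as $J$, arrange its zero-divisors to reproduce $\Gamma(R)$, and then invoke (4) (applied to the null summand) to trap the contradiction. For (1), if $\mathbb{Z}_p,F\in\mathfrak{M}$ with $F$ a field distinct from $\mathbb{Z}_p$, I would split on whether $F=\mathbb{Z}_q$ ($q\neq p$) or $F=GF(p^n)$ ($n>1$), and in each case exhibit two non-isomorphic rings in $\mathfrak{M}$ with identical zero-divisor graphs — typically $\mathbb{Z}_p\oplus F$ paired with a direct sum in which one summand is replaced by a non-unital companion having the same additive group. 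The combinatorial bookkeeping — checking that the companion genuinely lies in $\mathfrak{M}$ and yields the right graph — is where the proof needs the most care, and is the main obstacle.
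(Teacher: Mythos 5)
The paper itself does not prove this proposition — it is quoted from \cite{semr} — so your proposal has to be judged on its own merits. Parts (4) and (2) of your proposal are essentially sound. Part (4) is exactly the right twin construction. Part (2) works after a small patch for the fact that the relatively free ring $A$ has no unity: the set $I=T(\mathfrak{M})\cap x\mathbb{Z}[x]$ is in fact an ideal of all of $\mathbb{Z}[x]$, so you may pass to the unital ring $B=\mathbb{Z}[x]/I$, take a maximal ideal $\mathfrak{m}$ of $B$ avoiding $\bar{x}$ (Jacobson property), and then observe that the image of $A$ in the finite field $B/\mathfrak{m}$ is the \emph{whole} field, since the image of $x$ there is invertible of finite multiplicative order; hence that field, being a homomorphic image of $A$, lies in $\mathfrak{M}$.

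The genuine gaps are in (1) and (3), precisely the parts you deferred, and the routes you sketch for them would fail. For (1) you have missed the observation that makes it trivial: a field has no zero divisors, so its zero-divisor graph is the \emph{empty} graph, and any two finite fields therefore have isomorphic zero-divisor graphs. So if $\mathbb{Z}_p$ and a finite field $F\not\cong\mathbb{Z}_p$ both lie in $\mathfrak{M}$, the pair $(\mathbb{Z}_p,F)$ already violates the hypothesis — no auxiliary construction is needed. (Only an infinite field $F\in\mathfrak{M}$ needs an extra remark: an infinite field of characteristic $q$ satisfies only the identities following from $qx$ and $[x,y]$, so $var~F$ contains $GF(q^2)$, and characteristic $0$ gives $\mathbb{Z}$ and hence every $\mathbb{Z}_q$; either way $\mathfrak{M}$ acquires a second finite field.) Your proposed construction, by contrast, cannot be repaired: every finite ring in $var~\mathbb{Z}_p\vee var~F$ satisfies an identity of the form $x^m=x$ and so is reduced, hence the ``non-unital companion'' (a null ring on the same additive group) simply does not lie in $\mathfrak{M}$; and the graphs would not match anyway, since $\Gamma(N_{0,p}\oplus F)$ has $p|F|-1$ vertices (every nonzero element is a zero divisor there) while $\Gamma(\mathbb{Z}_p\oplus F)=K_{p-1,|F|-1}$ has only $p+|F|-2$.

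For (3) the same counting kills your plan: if $N$ is a nonzero null ring, every nonzero element of $F\oplus N$ is a zero divisor, so $\Gamma(F\oplus N)$ has $|F|\,|N|-1$ vertices, which can never equal $|\Gamma(R)|=|J|-1$. The correct twin sits inside $R$ itself. Replace $R$ by $R/J^2$, which is still a local non-field in $\mathfrak{M}$ (note $J\neq J^2$ since $J$ is nilpotent), so you may assume $J^2=(0)$. Then the zero divisors of $R$ are exactly the elements of $J\setminus\{0\}$ and any two of them multiply to zero, so $\Gamma(R)=K_{|J|-1}$. But $J$ is itself a subring of $R$, hence a finite ring in $\mathfrak{M}$, and it is a null ring, so $\Gamma(J)=K_{|J|-1}$ as well; since $|R|=|J|\cdot|R/J|>|J|$, we have $R\not\cong J$, contradicting the hypothesis. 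This argument needs neither part (4) nor any combinatorial bookkeeping, so the ``main obstacle'' you identified dissolves once the twins are chosen correctly.
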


\begin{proposition}\label{pr3} Let $p_1, \ldots, p_s$ be  prime numbers such that $p_i\neq p_j$ for $i\neq j$.  Then $\Gamma(R)\cong \Gamma(S)$ implies  $R\cong S$ for all finite rings $R,S\in \mathfrak{L}_{p_1,\ldots, p_s}$   (see \cite{semr}).\end{proposition}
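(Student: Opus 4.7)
My plan is to show that the variety $\mathfrak{L}_{p_1,\ldots,p_s}$ is so rigid that every finite member has a completely determined structure, and moreover its zero-divisor graph is always a complete graph whose size alone pins down the isomorphism type.

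First I would identify $T(\mathfrak{L}_{p_1,\ldots,p_s})$. Since $T(\mathfrak{M}\vee\mathfrak{N}) = T(\mathfrak{M})\cap T(\mathfrak{N})$, we have $T(\mathfrak{L}_{p_1,\ldots,p_s})=\bigcap_{i=1}^s T(\mathrm{var}\,N_{0,p_i})$. Each $N_{0,p_i}$ has square-zero multiplication and additive order $p_i$, so $xy=0$ and $p_i x=0$ lie in $T(\mathrm{var}\,N_{0,p_i})$. The identity $xy=0$ is common to all of them, hence it lies in $T(\mathfrak{L}_{p_1,\ldots,p_s})$; and looking at the linear part of an identity, the common divisors of the coefficients must be divisible by every $p_i$, so $(p_1 p_2\cdots p_s)x\in T(\mathfrak{L}_{p_1,\ldots,p_s})$.

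Next I would use these two identities to describe finite rings $R\in\mathfrak{L}_{p_1,\ldots,p_s}$. Zero multiplication combined with an additive group of exponent dividing $p_1\cdots p_s$ forces $R=\bigoplus_{i=1}^s R_{p_i}$, where each $R_{p_i}$ is an elementary abelian $p_i$-group, and the multiplication on $R$ is identically zero. Thus $R_{p_i}$ is a $GF(p_i)$-vector space (with zero product) of some dimension $n_i$, and the isomorphism type of $R$ is encoded precisely by the tuple $(n_1,\ldots,n_s)$, hence by $|R|=\prod p_i^{n_i}$ (because the $p_i$ are distinct primes and the prime factorization of $|R|$ recovers each $n_i$).

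Finally, since $xy=0$ for all $x,y\in R$, every nonzero element of $R$ is a two-sided zero-divisor and every pair of distinct nonzero elements is joined by an edge in $\Gamma(R)$. Hence $\Gamma(R)\cong K_{|R|-1}$, and the same holds for $S\in\mathfrak{L}_{p_1,\ldots,p_s}$. An isomorphism $\Gamma(R)\cong\Gamma(S)$ therefore yields $|R|=|S|$, and by the previous paragraph this already implies $R\cong S$.

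The only step that demands genuine care is the identification of the T-ideal and the resulting structural description of finite rings in $\mathfrak{L}_{p_1,\ldots,p_s}$; once that is in place the graph-theoretic conclusion is essentially immediate, so there is no serious obstacle beyond the initial variety-theoretic setup.
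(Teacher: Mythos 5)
Your proposal is correct: both identities $xy=0$ and $(p_1\cdots p_s)x=0$ hold in every generator $N_{0,p_i}$, hence in the join (since $T(\mathfrak{M}\vee\mathfrak{N})=T(\mathfrak{M})\cap T(\mathfrak{N})$), so every finite $R\in\mathfrak{L}_{p_1,\ldots,p_s}$ is a null ring whose additive group has squarefree exponent, which makes $R$ determined up to isomorphism by $|R|$ and makes $\Gamma(R)=K_{|R|-1}$ — note that you never actually need the full identification of the $T$-ideal, only these two containments, so the slightly awkward "common divisors of the coefficients" step can be dropped. The paper itself defers the proof to \cite{semr}, and your argument is the natural (essentially the only) route for this variety, so there is nothing further to compare.
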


\begin{corollary}\label{c1} Let $R$ be a finite ring. Then ${\Gamma(R)=K_2}$ iff $R$ is isomorphic to one of the following rings: $$N_{0,3}, \mathbb{Z}_9, \mathbb{Z}_3[x]/(x^2), \mathbb{Z}_2\oplus\mathbb{Z}_2 \mbox{~(see \cite{semr}).}$$ \end{corollary}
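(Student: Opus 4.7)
The plan is to prove both implications directly. For the ``if'' direction I would just check the four rings: $N_{0,3}=\{0,a,2a\}$ has $a\cdot 2a=0$; in $\mathbb{Z}_9$ the vertices are $3$ and $6$ with $3\cdot 6=0$; in $\mathbb{Z}_3[x]/(x^2)$ the vertices are $\bar x$ and $2\bar x$, killed by squaring; in $\mathbb{Z}_2\oplus\mathbb{Z}_2$ the vertices are $(1,0)$ and $(0,1)$. In each case the graph has precisely two vertices and one edge.

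For the converse, let $R$ be a finite ring with $\Gamma(R)=K_2$, so it has exactly two nonzero one- or two-sided zero-divisors $x,y$. I would first dispose of the case where $R$ has no identity. Using the standard fact that in a finite ring every element that is neither a left nor a right zero-divisor gives rise to a two-sided identity (left and right multiplication by it are injective endomorphisms of $R^+$, hence bijective, and the resulting one-sided identities must coincide), the absence of $1$ forces every nonzero element of $R$ to lie in the vertex set of $\Gamma(R)$. Hence $|R|=3$, and among the two rings of order three only $N_{0,3}$ has nontrivial zero-divisors.

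Now assume $R$ has unity. Since $R$ is finite, every non-unit is a zero-divisor, so the non-units are exactly $\{0,x,y\}$. I would split on whether $R$ has a nontrivial central idempotent $e$. If so, writing $R=eR\oplus(1-e)R$, every nonzero element of $eR$ annihilates every nonzero element of $(1-e)R$, so $(|eR|-1)+(|(1-e)R|-1)\le 2$, forcing $|eR|=|(1-e)R|=2$ and hence $R\cong\mathbb{Z}_2\oplus\mathbb{Z}_2$. Otherwise $R$ is indecomposable; being finite and semiperfect, $R/J(R)\cong M_n(D)$ for a finite division ring $D$, and if $n\ge 2$ the lifted matrix units $e_{12},e_{21}$ together with a lifted idempotent $e$ for $E_{11}$ and its complement $1-e$ already give at least four distinct nonzero zero-divisors in $R$. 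So $n=1$, $R$ is local, and $J(R)=\{0,x,y\}$.

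In this local case, $|J(R)|=3$ is prime and $J(R)$ is nilpotent, so $J(R)^2\subsetneq J(R)$ forces $J(R)^2=0$; then $J(R)$ is a nonzero $(R/J(R))$-module of order $3$, giving $R/J(R)\cong GF(3)$ and $|R|=9$. The characteristic of $R$ is $3$ or $9$: if $9$, then the subring generated by $1$ is already all of $R$ and $R\cong \mathbb{Z}_9$; if $3$, pick $a\in J(R)\setminus\{0\}$ and note $a^2\in J(R)^2=0$, so $R\cong\mathbb{Z}_3[a]/(a^2)\cong \mathbb{Z}_3[x]/(x^2)$. The main obstacle I expect is justifying that the indecomposable case forces $R/J(R)$ to be a field rather than a genuine matrix ring, since this is where noncommutativity could sneak back in; once $n=1$ is secured, the numerical argument on $|J(R)|=3$ closes everything cleanly.
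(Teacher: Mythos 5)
The paper gives no proof of this corollary at all (it is quoted from \cite{semr}), so your argument has to stand entirely on its own. Most of it does: the verification of the four rings, the no-identity case (giving $N_{0,3}$), the nontrivial-central-idempotent case (giving $\mathbb{Z}_2\oplus\mathbb{Z}_2$), and the final local analysis ($|J(R)|=3$ forces $J(R)^2=(0)$, then $R/J(R)\cong GF(3)$, $|R|=9$, and $\mathbb{Z}_9$ or $\mathbb{Z}_3[x]/(x^2)$ according to the characteristic) are all sound. The genuine gap is the step ``$R$ indecomposable, finite and semiperfect, hence $R/J(R)\cong M_n(D)$''. Indecomposability of $R$ does \emph{not} pass to $R/J(R)$: central idempotents of $R/J(R)$ lift to idempotents of $R$, but not necessarily to central ones. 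Concretely, the ring of upper triangular $2\times 2$ matrices over $GF(2)$ is finite and indecomposable (its center consists of the scalar matrices, so $0$ and $1$ are its only central idempotents), yet its quotient by the Jacobson radical is $\mathbb{Z}_2\times\mathbb{Z}_2$, which is not simple. So, as written, your case analysis does not cover indecomposable rings whose semisimple quotient has several simple factors, and the classification is incomplete at exactly the point you yourself flagged as the crux.

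The gap is repairable within your framework. If $R/J(R)$ had at least two simple factors, lift the identity of one factor to an idempotent $e\in R$ (possible since $J(R)$ is nilpotent). Then $e$ and $1-e$ are nonzero and $e(1-e)=0$, so the non-units of $R$ are exactly $\{0,e,1-e\}$; since $J(R)$ consists of non-units and contains no nonzero idempotent, $J(R)=(0)$, whence $R$ is semisimple and therefore does have a nontrivial central idempotent, contradicting indecomposability. Alternatively, and more economically: in a finite ring with $1$ an element is a unit iff its image modulo $J(R)$ is a unit, so the number of non-units of $R$ equals $|J(R)|$ times the number $m$ of non-units of $R/J(R)$; the equation $3=|J(R)|\cdot m$ leaves only two options, namely $J(R)=(0)$ with $R$ semisimple having three non-units (forcing $R\cong\mathbb{Z}_2\oplus\mathbb{Z}_2$), or $|J(R)|=3$ with $m=1$, i.e. $R/J(R)$ a field, which is your local case. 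This second route also makes the decomposable/indecomposable split and the lifting of matrix units for $n\geq 2$ unnecessary.
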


\begin{proposition}\label{pr4} Suppose  $\mathfrak{M}=\mathfrak{L}_{p_1,\ldots, p_s}\vee var~\mathbb{Z}_p,$ where $p,p_1, \ldots, p_s$ are  prime numbers such that $p_i\neq p_j$ for $i\neq j$ ($p$ may be equal to $p_i$ for some $i$).  Then $\Gamma(R)\cong \Gamma(S)$ implies  $R\cong S$ for all finite rings $R,S\in \mathfrak{M}$ iff  $(p_i,p)\neq(3,2)$ for~$i\leq s$ (see \cite{semr}). \end{proposition}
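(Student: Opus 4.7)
The argument naturally splits into necessity and sufficiency.

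\emph{Necessity.} Suppose $(p_j,p)=(3,2)$ for some $j\leq s$. Then $N_{0,3}\in var\,N_{0,3}\subseteq\mathfrak{L}_{p_1,\ldots,p_s}\subseteq\mathfrak{M}$, and $\mathbb{Z}_2\oplus\mathbb{Z}_2\in var\,\mathbb{Z}_2\subseteq\mathfrak{M}$. By Corollary~\ref{c1} both rings have zero-divisor graph $K_2$, yet they are not isomorphic (one has zero multiplication, the other does not), so the recognition property fails in $\mathfrak{M}$.

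\emph{Sufficiency.} Assume $(p_i,p)\neq(3,2)$ for every $i$, and let $R,S\in\mathfrak{M}$ be finite with $\Gamma(R)\cong\Gamma(S)$. The first step is a structural claim: every finite $T\in\mathfrak{M}$ splits as a ring direct sum $T=A\oplus B$ with $A\cong(\mathbb{Z}_p)^k$ and $B\in\mathfrak{L}_{p_1,\ldots,p_s}$ (so $B^2=0$ and the additive exponent of $B$ is a square-free product of primes among the $p_i$). This follows from Proposition~\ref{pr2}: parts (3) and (1) force the local direct summands of the semisimple part of $T$ to be copies of $\mathbb{Z}_p$, while part (4) rules out $N_{0,p^n}$ with $n\geq 2$, so the radical of $T$ — which lies in $\mathfrak{L}_{p_1,\ldots,p_s}$ — is squarezero with square-free additive exponent. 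The Wedderburn-Malcev decomposition for finite rings then splits the semisimple part off as an ideal.

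The second step is to recover $(A,B)$ from $\Gamma(T)$ up to isomorphism. Vertices from $A$ behave like coordinate-projections of a boolean-type ring (their neighbourhoods are determined by which coordinates vanish), while vertices from $B$ are pairwise orthogonal and uniformly annihilate every element of $B$. These local signatures isolate in $\Gamma(T)$ the induced subgraphs on the $A$-part and the $B$-part; Proposition~\ref{pr1} then identifies $A$ and Proposition~\ref{pr3} identifies $B$, yielding $R\cong S$.

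The main obstacle is this graph-recovery step when $p\in\{p_1,\ldots,p_s\}$: the $p$-primary parts of $A$ and $B$ then live in the same additive summand, and telling an idempotent $\mathbb{Z}_p$-component apart from a nilpotent $N_{0,p}$-component by adjacency alone is delicate. This is exactly where the hypothesis $(p_i,p)\neq(3,2)$ is used: by Corollary~\ref{c1} the only two non-isomorphic finite rings in $\mathfrak{M}$ of the two types considered that share a zero-divisor graph are $N_{0,3}$ and $\mathbb{Z}_2\oplus\mathbb{Z}_2$, and excluding $(3,2)$ forbids this single accidental coincidence.
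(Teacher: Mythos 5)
First, a remark on ground truth: this paper does not actually prove Proposition~\ref{pr4} — it is quoted from \cite{semr} — so your attempt can only be judged on its own merits. Your necessity argument is correct and is the standard one: when $(p_j,p)=(3,2)$, both $N_{0,3}$ and $\mathbb{Z}_2\oplus\mathbb{Z}_2$ lie in $\mathfrak{M}$, have graph $K_2$, and are not isomorphic. The sufficiency half, however, has two genuine gaps. The first is circularity: Proposition~\ref{pr2} has as its \emph{hypothesis} that $\Gamma(R)\cong\Gamma(S)$ implies $R\cong S$ for all finite rings of the variety — which is exactly the property you are trying to establish for $\mathfrak{M}$ in this direction. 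You may not apply Proposition~\ref{pr2} (nor Corollaries~\ref{c2}, \ref{c3}) to $\mathfrak{M}$ here. The structural claim you want is true, but it must be derived from the defining identities of the join: $\mathfrak{M}$ is commutative, satisfies $mx=0$ with $m$ square-free and $xy-x^py=0$ (true in $\mathbb{Z}_p$ and trivially in each $N_{0,p_i}$), whence $J(T)\,T=(0)$ for every finite $T\in\mathfrak{M}$ and idempotent lifting gives $T\cong(\mathbb{Z}_p)^k\oplus B$ with $B$ null; one also needs explicit identities such as $(x^p-x)(y^p-y)$ and $px+\beta(x^p-x)$ with $\beta\equiv p \pmod{p_1\cdots p_s}$ to rule out fields other than $\mathbb{Z}_p$ and, when $p\notin\{p_1,\ldots,p_s\}$, to rule out $N_{0,p}$. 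None of this appears in your argument; in particular "the radical lies in $\mathfrak{L}_{p_1,\ldots,p_s}$, hence is square-zero" is asserted, and the square-zero property does not follow from Proposition~\ref{pr2}(4) anyway.

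The second gap is the graph-recovery step, which is the actual content of the proposition and which you essentially assert. For $T=(\mathbb{Z}_p)^k\oplus B$ with $B$ null, $(a,b)$ and $(a',b')$ are adjacent iff the supports of $a$ and $a'$ in $(\mathbb{Z}_p)^k$ are disjoint; consequently $\Gamma(T)$ depends only on the triple $(p,k,|B|)$, and since a finite null ring of square-free exponent is determined by its order, sufficiency amounts to proving that $(k,|B|)\mapsto\Gamma$ is injective — with the single exceptional coincidence $(k,|B|)=(2,1)$, $p=2$ versus $(k,|B|)=(0,3)$. Corollary~\ref{c1} cannot carry this burden: it only classifies the finite rings whose graph is $K_2$, and says nothing about possible coincidences among larger rings, so your sentence "the only two non-isomorphic rings sharing a graph are $N_{0,3}$ and $\mathbb{Z}_2\oplus\mathbb{Z}_2$" is precisely the statement that still needs proof (by comparing vertex counts, numbers of dominating vertices, degrees, etc.). Likewise, Propositions~\ref{pr1} and~\ref{pr3} only treat pairs of rings lying both in $var~\mathbb{Z}_p$ or both in $\mathfrak{L}_{p_1,\ldots,p_s}$; the danger is cross-collisions between rings with different splittings $A\oplus B$, and a graph isomorphism need not carry the "$A$-part" to the "$A$-part" — in the exceptional $K_2$ coincidence it cannot — so the "local signature" separation you invoke is not available without proof.
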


\begin{proposition}\label{pr5}  For every prime number~$p$  $$\Gamma\left(N_{p^2}\right)=\Gamma\left(N_{p,p}\right)=\Gamma\left(A_p\right)=\Gamma\left(A^0_p\right)=\Gamma\left(N_{0,p}\oplus \mathbb{Z}_p\right)$$ (see \cite{semr}).\end{proposition}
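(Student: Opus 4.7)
The plan is to show that all five rings have $p^2$ elements total, with every nonzero element a zero-divisor, and that the resulting graph in each case is the complete split graph $K_{p-1}\vee\overline{K_{p^2-p}}$ on $p^2-1$ vertices (a clique of size $p-1$ completely joined to an independent set of size $p^2-p$). Establishing this normal form for each ring separately immediately yields the chain of equalities, since isomorphic graphs are the same object. In each case the strategy is: enumerate the nonzero elements, compute a general product, identify the ``left/right total annihilators'' inside the ring, and check that these form one part of the split and the remaining nonzero elements form the other.

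Concretely, for $N_{p^2}$ the nonzero elements are $ka$ with $1\le k\le p^2-1$, and $(ka)(la)=klp\cdot a$, which vanishes iff $p\mid kl$; splitting the indices by whether $p\mid k$ gives the two parts of the split graph. For $N_{p,p}$ one writes a typical element as $M(a,b)$ with $a,b\in GF(p)$ and computes $M(a,b)M(c,d)=M(0,ac)$, so the split is by whether the entry $a$ vanishes. For $A_p$ and $A_p^0$ the elements are parametrized by pairs $(a,b)\in GF(p)^2$; in $A_p$ one has $(a,b)(c,d)=(ac,ad)$, so $xy=0\Leftrightarrow a=0$ and $yx=0\Leftrightarrow c=0$, and the analogous opposite computation works in $A_p^0$. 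For $N_{0,p}\oplus\mathbb{Z}_p$ the product $(ka,m)(la,n)=(0,mn)$ vanishes iff $m=0$ or $n=0$, and the split is by the second coordinate.

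In each case one checks: (i) every nonzero element of the ``annihilator part'' kills every other nonzero element, yielding a $K_{p-1}$ inside $\Gamma(R)$ and producing edges to every vertex of the other part; (ii) every element of the remaining part has nonzero products with all elements of its own part, so that part is an independent set of size $p^2-p$; (iii) the annihilator part is itself nonempty and of size exactly $p-1$. Together these three facts identify $\Gamma(R)$ uniquely with $K_{p-1}\vee\overline{K_{p^2-p}}$.

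The arguments are routine matrix and modular computations with no real obstacle; the only small care is needed in the noncommutative cases $A_p$ and $A_p^0$, where one must verify the adjacency condition $xy=0\text{ or }yx=0$ by computing both products, and in checking that an element of the ``non-annihilator'' part is still a zero-divisor (it is, because it is joined to the $p-1$ vertices of the annihilator part). Since the final graph on both sides of every equality is the same complete split graph, the claimed identifications follow.
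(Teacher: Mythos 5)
Your proof is correct: in each of the five rings all $p^2-1$ nonzero elements are zero-divisors, and your product computations ($(ka)(la)=klp\,a$ in $N_{p^2}$, $M(a,b)M(c,d)=M(0,ac)$ in $N_{p,p}$, $(a,b)(c,d)=(ac,ad)$ in $A_p$ and its transpose in $A_p^0$, $(ka,m)(la,n)=(0,mn)$ in $N_{0,p}\oplus\mathbb{Z}_p$) correctly identify each graph as the complete split graph $K_{p-1}\vee\overline{K_{p^2-p}}$, with the clique given by the $p-1$ two-sided annihilating elements. The paper itself gives no proof of this proposition, deferring to \cite{semr}, and your direct verification of a common normal form is exactly the natural argument, so there is nothing to add.
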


\begin{corollary}\label{c2} Suppose $\mathfrak{M}$ is a variety of rings such that $\Gamma(R)\cong \Gamma(S)$ implies  $R\cong S$ for all finite rings $R,S\in \mathfrak{M}$. Then $A_p, A^0_p \notin \mathfrak{M}$ for each prime number~$p$ (see \cite{semr}).\end{corollary}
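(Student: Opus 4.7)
The plan is to argue by contradiction, using Proposition~\ref{pr5} to produce a second ring in $\mathfrak{M}$ with the same zero-divisor graph as $A_p$ (respectively $A_p^0$) but not isomorphic to it. The key observation is that $\mathfrak{M}$, being a variety, is closed under subrings and direct products, so once $A_p$ belongs to $\mathfrak{M}$ we obtain ``for free'' certain smaller companion rings that combine into a decomposable ring sharing the same graph.

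First I would suppose $A_p\in\mathfrak{M}$ and locate two natural subrings inside $A_p$: the ``diagonal copy'' $\left\{\left(\begin{array}{cc} a & 0 \\ 0 & 0 \end{array}\right):a\in GF(p)\right\}\cong\mathbb{Z}_p$ and the ``off-diagonal copy'' $\left\{\left(\begin{array}{cc} 0 & b \\ 0 & 0 \end{array}\right):b\in GF(p)\right\}\cong N_{0,p}$ (the latter since the square of any such matrix is zero and $p$ annihilates it additively). Because $\mathfrak{M}$ is closed under subrings, both $\mathbb{Z}_p$ and $N_{0,p}$ lie in $\mathfrak{M}$; then closure under direct products gives $N_{0,p}\oplus\mathbb{Z}_p\in\mathfrak{M}$.

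Next I would invoke Proposition~\ref{pr5}, which states $\Gamma(A_p)=\Gamma(N_{0,p}\oplus\mathbb{Z}_p)$. The two rings are not isomorphic: $N_{0,p}\oplus\mathbb{Z}_p$ is commutative (since both summands are), while $A_p$ is not (one computes $\left(\begin{array}{cc} 1 & 0 \\ 0 & 0 \end{array}\right)\left(\begin{array}{cc} 0 & 1 \\ 0 & 0 \end{array}\right)\neq \left(\begin{array}{cc} 0 & 1 \\ 0 & 0 \end{array}\right)\left(\begin{array}{cc} 1 & 0 \\ 0 & 0 \end{array}\right)$). This contradicts the defining hypothesis on $\mathfrak{M}$, so $A_p\notin\mathfrak{M}$.

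The argument for $A_p^0$ is parallel: it too contains copies of $\mathbb{Z}_p$ (now in the $(1,1)$-position) and of $N_{0,p}$ (now in the $(2,1)$-position as nilpotent elements), so $A_p^0\in\mathfrak{M}$ again forces $N_{0,p}\oplus\mathbb{Z}_p\in\mathfrak{M}$, and Proposition~\ref{pr5} together with the non-commutativity of $A_p^0$ yields the same contradiction. No real obstacle is expected here; the whole proof is a short ``transfer'' argument riding on Proposition~\ref{pr5}, and the only thing to be careful about is correctly exhibiting the two subrings and checking non-commutativity of $A_p$ and $A_p^0$.
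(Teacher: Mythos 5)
Your proof is correct and is essentially the intended argument: the paper states Corollary~\ref{c2} as an immediate consequence of Proposition~\ref{pr5} (citing \cite{semr}), namely that $A_p\in\mathfrak{M}$ would force $N_{0,p}\oplus\mathbb{Z}_p\in\mathfrak{M}$ via closure under subrings and direct sums, contradicting $\Gamma(A_p)=\Gamma(N_{0,p}\oplus\mathbb{Z}_p)$ together with $A_p\not\cong N_{0,p}\oplus\mathbb{Z}_p$. This is exactly the pattern the paper itself uses later (e.g.\ to show $N_4\notin\mathfrak{M}$ in the proof of Theorem~\ref{th3}), and your subring identifications and the non-commutativity check are accurate.
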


\begin{corollary}\label{c3} Suppose $\mathfrak{M}$ is a variety of rings such that $\Gamma(R)\cong \Gamma(S)$ implies  $R\cong S$ for all finite rings $R,S\in \mathfrak{M}$. If  $\mathbb{Z}_p\in \mathfrak{M}$ for some prime number~$p$, then $N_{p,p} \notin \mathfrak{M}$ (see \cite{semr}).\end{corollary}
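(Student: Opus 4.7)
The plan is a short proof by contradiction exploiting Proposition~\ref{pr5}. Assume toward contradiction that $N_{p,p}\in\mathfrak{M}$. I will exhibit a second, non-isomorphic finite ring in $\mathfrak{M}$ with the same zero-divisor graph, thereby violating the hypothesis on $\mathfrak{M}$. The natural candidate supplied by Proposition~\ref{pr5} is $N_{0,p}\oplus\mathbb{Z}_p$, since that proposition already provides $\Gamma(N_{p,p})\cong\Gamma(N_{0,p}\oplus\mathbb{Z}_p)$.

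To see that $N_{0,p}\oplus\mathbb{Z}_p$ also lies in $\mathfrak{M}$, I first locate a copy of $N_{0,p}$ inside $N_{p,p}$: the subset of matrices in $N_{p,p}$ with $a=0$ is a (square-zero) ideal of additive order $p$, hence is isomorphic to $N_{0,p}$ as a ring. Closure of the variety $\mathfrak{M}$ under subrings therefore yields $N_{0,p}\in\mathfrak{M}$, and, since $\mathbb{Z}_p\in\mathfrak{M}$ by hypothesis, closure under finite direct products gives $N_{0,p}\oplus\mathbb{Z}_p\in\mathfrak{M}$.

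The hypothesis on $\mathfrak{M}$ combined with Proposition~\ref{pr5} would then force $N_{p,p}\cong N_{0,p}\oplus\mathbb{Z}_p$. This is absurd: $N_{p,p}$ is a nilpotent ring (direct computation from the matrix form shows $N_{p,p}^3=0$), whereas $N_{0,p}\oplus\mathbb{Z}_p$ possesses the nonzero unity $(0,1)$. The contradiction yields $N_{p,p}\notin\mathfrak{M}$. I do not expect a genuine obstacle here; the argument reduces to identifying the ``partner ring'' from Proposition~\ref{pr5} and verifying the varietal closure properties, and the only point requiring slight care is the explicit embedding of $N_{0,p}$ inside $N_{p,p}$.
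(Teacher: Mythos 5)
Your overall strategy is exactly the intended one: the corollary is placed immediately after Proposition~\ref{pr5} precisely because it follows from the graph coincidence $\Gamma(N_{p,p})=\Gamma(N_{0,p}\oplus\mathbb{Z}_p)$, the closure of a variety under subrings and finite direct sums (your embedding of $N_{0,p}$ as the matrices with $a=0$ is correct), and the non-isomorphism of the two rings of order $p^2$.

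One claim in your final step is false, though easily repaired: $(0,1)$ is \emph{not} a unity of $N_{0,p}\oplus\mathbb{Z}_p$, and this ring has no unity at all, since $(x,y)(0,1)=(0,y)\neq(x,y)$ whenever $x\neq 0$ (equivalently, $N_{0,p}$ itself has no unity). What $(0,1)$ actually is, is a nonzero \emph{idempotent}, and that already suffices: a nilpotent ring cannot contain a nonzero idempotent, because $e^2=e\neq 0$ forces $e^n=e\neq 0$ for all $n$, contradicting $N_{p,p}^3=(0)$. With ``unity'' replaced by ``idempotent'' the contradiction stands and your proof is complete, matching the argument the paper attributes to \cite{semr}.
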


\begin{corollary}\label{c4}  Suppose $\mathfrak{M}$ is a variety of rings such that $\Gamma(R)\cong \Gamma(S)$ implies  $R\cong S$ for all finite rings $R,S\in \mathfrak{M}$. Then $N_{q^2}\notin \mathfrak{M}$ for each odd prime number~$q$ (see \cite{semr}).\end{corollary}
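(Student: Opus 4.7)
Assume, towards a contradiction, that $N_{q^2}\in\mathfrak{M}$ for some odd prime $q$. Since $\mathfrak{M}$ is a variety, $var(N_{q^2})\subseteq\mathfrak{M}$, so it is enough to exhibit $N_{0,q^2}$ inside $var(N_{q^2})$; this contradicts Proposition~\ref{pr2}(4) and finishes the argument. My plan is to verify the $T$-ideal inclusion $T(N_{q^2})\subseteq T(N_{0,q^2})$ by a direct substitution analysis, rather than constructing $N_{0,q^2}$ as an explicit subdirect factor.

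A generic element of $N_{q^2}$ is $ma$, and from $a^2=qa$, $q^2a=0$ one gets $(ma)^2=m^2qa$ together with $(ma)^k=0$ for all $k\geq 3$. Given $f\in T(N_{q^2})$, I would substitute $x_i\mapsto ma$ and $x_j\mapsto 0$ for $j\neq i$, which kills every monomial except the ones in $x_i$ alone and annihilates all $x_i^k$ with $k\geq 3$. Letting $c_i^{(1)}$ and $c_i^{(2)}$ be the coefficients of $x_i$ and $x_i^2$ in $f$, this reduces the identity to
$$c_i^{(1)}m+c_i^{(2)}m^2q\equiv 0\pmod{q^2}\qquad\text{for every }m\in\mathbb{Z}.$$
Taking the sum and the difference of the cases $m=1$ and $m=-1$ yields $2c_i^{(2)}q\equiv 0\pmod{q^2}$ and $2c_i^{(1)}\equiv 0\pmod{q^2}$; the hypothesis $q\neq 2$ now converts these into $q\mid c_i^{(2)}$ and $q^2\mid c_i^{(1)}$.

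To evaluate $f$ on $N_{0,q^2}$, observe that every monomial of degree $\geq 2$ vanishes since $N_{0,q^2}^{2}=(0)$, so only the linear part $\sum_i c_i^{(1)}x_i$ contributes; it too vanishes because $q^2$ annihilates $N_{0,q^2}$ and $q^2\mid c_i^{(1)}$. Hence $N_{0,q^2}$ satisfies every identity of $N_{q^2}$, so $N_{0,q^2}\in var(N_{q^2})\subseteq\mathfrak{M}$, contradicting Proposition~\ref{pr2}(4).

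The main obstacle is the coefficient-extraction step, and it is exactly where oddness of $q$ becomes essential: at $q=2$ one has $x^2-2x\in T(N_4)$ but $x^2-2x\notin T(N_{0,4})$, so the inclusion $T(N_{q^2})\subseteq T(N_{0,q^2})$ fails and the argument has no chance of working in that case, matching the statement of the corollary.
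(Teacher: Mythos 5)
Your proof is correct. In $N_{q^2}$ one indeed has $(ma)^2=m^2qa$ and $(ma)^k=0$ for $k\geq 3$, so the substitution $x_i\mapsto ma$, $x_j\mapsto 0$ ($j\neq i$) reduces any $f\in T(N_{q^2})$ to the congruence $c_i^{(1)}m+c_i^{(2)}m^2q\equiv 0\pmod{q^2}$; the $m=\pm 1$ trick plus invertibility of $2$ modulo $q^2$ gives $q^2\mid c_i^{(1)}$, which is exactly what is needed for $f$ to vanish identically on the null ring $N_{0,q^2}$; hence $T(N_{q^2})\subseteq T(N_{0,q^2})$, so $N_{0,q^2}\in var\,(N_{q^2})\subseteq\mathfrak{M}$, contradicting Proposition~\ref{pr2}(4). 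This is, however, a genuinely different route from the paper's. The paper states the result as a corollary of Proposition~\ref{pr5} (with proof in \cite{semr}), and the derivation it points to stays inside the graph-theoretic hypothesis: one shows that $N_{q,q}\in var\,(N_{q^2})$ for odd $q$ (the same kind of coefficient analysis shows that $T(N_{q^2})$ is generated by $xyz$, $[x,y]$, $q^2x$, $qx^2$, all of which $N_{q,q}$ satisfies, while for $q=2$ the identity $x^2+2x\in T(N_4)$ obstructs the inclusion), and then $\Gamma(N_{q^2})=\Gamma(N_{q,q})$ together with $N_{q^2}\not\cong N_{q,q}$ (their additive groups differ) contradicts the determination hypothesis directly. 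So the paper manufactures an explicit non-isomorphic pair with equal graphs inside $\mathfrak{M}$, whereas you outsource all graph-theoretic content to Proposition~\ref{pr2}(4) and keep only a $T$-ideal computation; the arithmetic core, including the precise point where oddness of $q$ enters, is the same in both. Your version has the advantage of not needing Proposition~\ref{pr5} at all; the paper's version is closer to the spirit of the subject and produces the witnessing pair of rings. Your closing observation that $x^2-2x\in T(N_4)\setminus T(N_{0,4})$ correctly pinpoints why $q=2$ must be excluded in either approach.
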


Before proving the main results, let us prove a number of supplementary results.

\begin{proposition}\label{pr7} Suppose $\mathfrak{M}$ is a variety of rings such that $\Gamma(R)\cong \Gamma(S)$ implies  $R\cong S$ for all finite rings $R,S\in \mathfrak{M}$. Then  $mx, d x+ x^2 g(x)\in T(\mathfrak{M}),$  where $m\in\mathbb{N},$ $g(x)\in \mathbb{Z}[x]$,   either $d=1$, or $d=q_1 q_2\ldots q_l,$ and $q_1, q_2, \ldots, q_l$ are mutually different prime divisors of $m$.  \end{proposition}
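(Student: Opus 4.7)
The plan is to produce the two identities in $T(\mathfrak{M})$ in two stages: first, establish that $mx \in T(\mathfrak{M})$ for some $m \in \mathbb{N}$; second, construct $d$ and $g(x) \in \mathbb{Z}[x]$ with $dx + x^2 g(x) \in T(\mathfrak{M})$ so that $d$ has the required form.

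For the first stage, I would exploit the structural constraints provided by Propositions~\ref{pr1}--\ref{pr5} and their corollaries: Proposition~\ref{pr2}(4) excludes $N_{0, p^n}$ for $n \geq 2$, Corollary~\ref{c4} excludes $N_{q^2}$ for odd $q$, Proposition~\ref{pr2}(3) forces every local ring in $\mathfrak{M}$ to be a field, and Proposition~\ref{pr2}(1) allows at most one prime field $\mathbb{Z}_p$ in $\mathfrak{M}$. Given any element $a$ of additive order $p^2$ in some finite $R \in \mathfrak{M}$, the cyclic subring $\langle a \rangle$ is, after rescaling by a unit in $\mathbb{Z}/p^2$, isomorphic to one of $N_{0, p^2}$, $\mathbb{Z}_{p^2}$, or $N_{p^2}$, each forbidden for odd $p$. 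A more delicate analysis at $p = 2$ using the ZD-graph hypothesis directly likewise bounds the $2$-torsion. Furthermore, if $\mathfrak{M}$ contained $N_{0, q}$ for infinitely many primes $q$, then the one-variable identity ideal $T(\mathfrak{M}) \cap \mathbb{Z}[x]$ would reduce to $(x^2)$, so $N_{0, q^2} \in \mathfrak{M}$ for each such $q$, contradicting Proposition~\ref{pr2}(4). Thus only finitely many primes contribute, and $mx \in T(\mathfrak{M})$ for some $m$.

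For the second stage, factor $m = \prod_i q_i^{a_i}$ into prime powers, and define $d$ to be the product of those primes $q_i \mid m$ for which some nilpotent $q_i$-torsion ring lies in $\mathfrak{M}$ (with $d = 1$ if none do). For each $q_i \mid m$, I build a local polynomial $g_i(x) \in \mathbb{Z}[x]$ modulo $q_i^{a_i}$: when $q_i \nmid d$ (so that only $\mathbb{Z}_{q_i}$ contributes $q_i$-torsion), Fermat's little theorem gives $x \equiv x^{q_i} \pmod{q_i}$, so the choice $g_i(x) \equiv -d\, x^{q_i - 2} \pmod{q_i}$ makes $dx + x^2 g_i(x) \equiv d(x - x^{q_i}) \equiv 0$ in $\mathbb{Z}_{q_i}$; when $q_i \mid d$, the exponent-$q_i$ bound from Stage~1 yields $dx \equiv 0$ in the nilpotent $q_i$-torsion rings of $\mathfrak{M}$, and one chooses $g_i$ so that $x^2 g_i(x) \equiv 0$ on them (exploiting the bounded nilpotency class forced by Stage~1). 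By the Chinese Remainder Theorem, the local pieces $g_i$ assemble into a single $g(x) \in \mathbb{Z}[x]$ with $dx + x^2 g(x) \in T(\mathfrak{M})$.

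The main obstacle is Stage~1: producing the uniform additive-exponent bound. The odd-prime case follows readily from Corollary~\ref{c4} together with the cyclic-subring analysis, but handling $p = 2$ and ruling out infinitely many contributing nilpotent primes require direct appeal to the ZD-graph hypothesis, typically by exhibiting two non-isomorphic finite rings in $\mathfrak{M}$ with identical zero-divisor graphs whenever the required finiteness would fail. Once Stage~1 and the structural bounds at each prime are in place, Stage~2 is a routine CRT assembly.
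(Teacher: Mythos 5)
Your proposal has a genuine gap, and it is structural rather than a fixable detail. Both of your stages try to certify that a polynomial lies in $T(\mathfrak{M})$ by checking it against the \emph{finite} rings of $\mathfrak{M}$ (torsion bounds on finite rings in Stage~1, a CRT assembly verified on $\mathbb{Z}_{q_i}$ and on nilpotent $q_i$-torsion rings in Stage~2). But $T(\mathfrak{M})$ consists of the identities of \emph{all} rings of $\mathfrak{M}$ --- equivalently, of its relatively free rings, which are infinite --- while the hypothesis of the proposition constrains only finite rings. At this point of the paper it is not known that $\mathfrak{M}$ is generated by its finite rings: that fact (Cross-ness, via Lvov's theorem) is deduced in the proof of Theorem~\ref{th1} \emph{from} Proposition~\ref{pr7}, so invoking it here would be circular. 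The paper's own proof avoids finite-ring structure theory entirely and works with $T$-ideal non-containments: since $N_{0,4}\notin\mathfrak{M}$ and $T(N_{0,4})=\{4x,xy\}^T\supseteq\{xy\}^T$, one gets $T(\mathfrak{M})\not\subseteq\{xy\}^T$, hence an identity $kx+x^2\varphi(x)$ with $k\neq 0$; likewise $N_{0,q_i^2}\notin\mathfrak{M}$ yields, for each prime $q_i$ dividing $m$, an identity $\alpha_i x+x^2\psi_i(x)$ with $q_i^2\nmid\alpha_i$; then $d=\gcd(\alpha_1,\ldots,\alpha_t,m)$ is automatically squarefree and a Bezout combination of these identities gives $dx+x^2g(x)\in T(\mathfrak{M})$ --- no CRT, no Fermat, no classification of torsion rings. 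Your one genuinely $T$-ideal-theoretic remark (infinitely many $N_{0,q}$ in $\mathfrak{M}$ would force all linear coefficients of one-variable identities to vanish, whence $N_{0,q^2}\in\mathfrak{M}$) is exactly the right kind of argument, but you do not build the proof out of it.

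Even granting an identity with nonzero linear coefficient, you never explain how to obtain the \emph{pure} identity $mx$: your Stage~1 simply ends with ``thus $mx\in T(\mathfrak{M})$,'' which is a non sequitur. The paper's key device is the iterated elimination $2^sf(x)-f(2x)$, which kills the top-degree term at each step and leaves $(2^s-2)(2^{s-1}-2)\cdots(2^2-2)k\,x\in T(\mathfrak{M})$; nothing in your proposal plays this role, and exponent bounds on finite rings cannot substitute for it. Two subsidiary claims are also false as stated: a one-generated finite ring containing an element of additive order $p^2$ need not be, up to rescaling, one of $N_{0,p^2}$, $\mathbb{Z}_{p^2}$, $N_{p^2}$ (the subring generated by $t$ in $\mathbb{Z}_{p^2}[t]/(t^3)$ has order $p^4$ and is none of these, though it does \emph{contain} a copy of $N_{0,p^2}$, which is how Proposition~\ref{pr8} of the paper handles the analogous point inside a square-zero ideal); and additive exponent bounds do not bound nilpotency class (strictly upper triangular matrix rings over $GF(q)$ of arbitrary size all have exponent $q$), so the ``bounded nilpotency class forced by Stage~1'' that your Stage~2 relies on does not exist.
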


\begin{proof} By Proposition~\ref{pr2}(4) we have $N_{0,4}\notin\mathfrak{M}.$ Note that  $T(N_{0,4})=\{4x, xy\}^T.$ Therefore $T(\mathfrak{M})\not\subseteq \{xy\}^T$. It means that $T(\mathfrak{M})$ contains a polynomial $f(x)=$ $=kx+x^2\varphi(x),$ where $k$ is some nonzero integer and $\varphi(x)$ is some polynomial in $\mathbb{Z}[x].$  Let $f(x)=kx+x^2\varphi(x)=kx+a_2x^2+a_3x^3+\ldots+a_sx^s,$ where $a_2,a_3, \ldots, a_s\in \mathbb{Z},$ $s\geq 2$. Then
\begin{equation*}\begin{split}
2^s&f(x)-f(2x)=\\&=(2^s-2)kx+(2^s-2^2)a_2x^2+(2^s-2^3)a_3x^3+\ldots+(2^s-2^{s-1})a_{s-1}x^{s-1}=0.
\end{split}\end{equation*} 
Repeating the argument, we see that $$(2^s-2)(2^{s-1}-2)\ldots(2^2-2)kx\in T(\mathfrak{M}).$$ 

Thus the variety $\mathfrak{M}$ satisfies the identities  $mx=0$ and  $kx+x^2\varphi(x)=0,$ where ${m=(2^s-2)(2^{s-1}-2)}$ ${\ldots(2^2-2)k}$. We may assume that ${k\geq 1.}$ Let $m=q_1^{\beta_1} q_2^{\beta_2}\ldots q_t^{\beta_t},$ where $q_1,q_2,\ldots ,q_t$ are prime numbers such that $q_i\neq q_j$ for $i\neq j.$ By Proposition~\ref{pr2}(4), we have $N_{0,q_i^2}\notin\mathfrak{M}$ for $i\leq t.$ Therefore  $T(\mathfrak{M})\not\subseteq T(N_{0,q_i^2})=\{q_i^2x, xy\}^T,$ $i\leq t.$ This implies that for every $i\leq t$ there exists  a polynomial $\alpha_ix+x^2\psi_i(x)$ in $T(\mathfrak{M})$  such that $\alpha_i\in \mathbb{Z}$ and $q_i^2$ is not a divisor of~$\alpha_i$.  Let $d$ be the greatest common divisor of the numbers $\alpha_1, \alpha_2, \ldots, \alpha_t, m.$ We see that either $d=1$, or $d=q_{i_1}q_{i_2}\ldots q_{i_l},$ where $q_1, q_2, \ldots, q_l$ are mutually different prime divisors of $m$.   If $d=1$, then the proof is straightforward. Now let $d=q_{i_1}q_{i_2}\ldots q_{i_l}\neq 1.$ Note that  $q_{i_\mu}\neq q_{i_\nu}$  for $\mu\neq \nu.$ Further there exist integers $v_1, v_2, \ldots, v_t, v$ such that $$\alpha_1v_1+ \alpha_2v_2+ \ldots+ \alpha_tv_t+ mv=d.$$ Multiplying the identities $$\alpha_1x+x^2\psi_1(x)=0,\ldots,\alpha_tx+x^2\psi_t(x)=0, mx=0$$ by $v_1, v_2, \ldots, v_t, v$ respectively and summing them, we obtain  ${dx+x^2g(x) \in T(\mathfrak{M})}$ for some $g(x)\in \mathbb{Z}[x].$ Finally, notice that every prime divisor of $d$ is a divisor of $m$. This completes the proof.
\end{proof}

\begin{proposition}\label{pr8}Suppose $\mathfrak{M}$ is a variety of rings such that $\Gamma(R)\cong \Gamma(S)$ implies  $R\cong S$ for all finite rings $R,S\in \mathfrak{M}$. Then $T(\mathfrak{M})$ contains an identity of the form~$mx$, where $m=q_1^{\beta_1} q_2^{\beta_2}\ldots q_t^{\beta_t},$ $\beta_i\leq 3$ for all $i\leq t,$ and $q_1,q_2,\ldots ,q_t$ are prime numbers such that $q_i\neq q_j$ for $i\neq j.$\end{proposition}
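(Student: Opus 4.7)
The strategy is to apply Proposition~\ref{pr7} and then invoke Proposition~\ref{pr2}(4) iteratively to bound the $q$-primary part of the additive order of every element in every ring of $\mathfrak{M}$, for each prime $q$ dividing $m_{0}$.

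By Proposition~\ref{pr7} we have $m_{0}x, dx+x^{2}g(x) \in T(\mathfrak{M})$ with $d$ squarefree and dividing $m_{0}$. The identity $dx=-x^{2}g(x)$ iterates to $d^{n}x \equiv (-1)^{n}x^{n+1}g(x)^{n}$ modulo $T(\mathfrak{M})$, and choosing $n$ large enough that $d^{n}\mid m_{0}$ gives $x^{n+1}g(x)^{n} \in T(\mathfrak{M})$. I would use this to argue that $\mathbb{Z}[a]$ is finite for every $a$ in every $R\in\mathfrak{M}$: the polynomial $x^{n+1}g(x)^{n}$ annihilates $a$ while $m_{0}a=0$ bounds the additive torsion, which together make $\mathbb{Z}[a]$ finitely generated as an abelian group of bounded torsion. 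Hence any monogenic subring of a ring in $\mathfrak{M}$ is finite, commutative, and still in $\mathfrak{M}$.

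Next fix a prime $q\mid m_{0}$, an element $a\in R\in\mathfrak{M}$, and let $a_{q}\in\mathbb{Z}[a]$ denote the $q$-primary component of $a$ produced by a Bezout identity applied to its additive order. The subring $A=\mathbb{Z}[a_{q}]$ is a finite commutative $q$-ring in $\mathfrak{M}$. If $A$ has a multiplicative identity, it decomposes as a product of finite local rings, each a field by Proposition~\ref{pr2}(3); since these fields have characteristic $q$, they have additive exponent $q$, so the $q$-order of $a$ is at most $q$. Otherwise $A$ has no unity and is therefore nilpotent, say $a_{q}^{k}=0$ for some minimal $k\ge 2$, and this is the main case.

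In the nilpotent case, writing $q^{\beta_{j}}$ for the additive order of $a_{q}^{j}$, for any $j\ge 1$ the choice $c=\lceil\beta_{2j}/2\rceil$ ensures $(q^{c}a_{q}^{j})^{2}=q^{2c}a_{q}^{2j}=0$, so the subring generated by $q^{c}a_{q}^{j}$ is additively cyclic with zero multiplication, i.e., isomorphic to $N_{0,q^{\beta_{j}-c}}$. Proposition~\ref{pr2}(4) forces $\beta_{j}-c\le 1$, yielding the recursion
\[
\beta_{j}\le\bigl\lceil\beta_{2j}/2\bigr\rceil+1.
\]
Starting from the base case $\beta_{j}=0$ for $j\ge k$ and iterating downwards in $j$, one obtains $\beta_{j}\le 3$ for every $j\ge 1$; in particular $\beta=\beta_{1}\le 3$. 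The principal obstacle is precisely this recursion and verifying its base case cleanly, since the other ingredients are routine. Assembling the bound over all primes dividing $m_{0}$, every element of every ring in $\mathfrak{M}$ is annihilated by $q_{1}^{3}\cdots q_{t}^{3}$, whence $mx\in T(\mathfrak{M})$ with $m=q_{1}^{3}\cdots q_{t}^{3}$ and $\beta_{i}=3$ for each $i$, completing the proof.
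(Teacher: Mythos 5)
Your core squeezing device --- multiply by $q^{c}$ to kill squares, then invoke Proposition~\ref{pr2}(4) to force additive exponent at most $q$ --- is exactly the engine of the paper's proof, and the recursion $\beta_{j}\le\lceil\beta_{2j}/2\rceil+1$ is correct. But the scaffolding you build to reach the nilpotent monogenic case has two genuine gaps. First, the finiteness step fails: from $d^{n}x\equiv(-1)^{n}x^{n+1}g(x)^{n}$ you may conclude $x^{n+1}g(x)^{n}\in T(\mathfrak{M})$ only if $d^{n}x\in T(\mathfrak{M})$, i.e.\ if $m_{0}\mid d^{n}$ (you wrote the divisibility backwards), and this is unattainable in general because Proposition~\ref{pr7} does not guarantee that every prime divisor of $m_{0}$ divides $d$. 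Indeed, finiteness of monogenic rings simply does not follow from the identities supplied by Proposition~\ref{pr7}: the infinite monogenic ring $\mathbb{Z}_{q}[t]t$ (polynomials with zero constant term) satisfies $qx=0$ and $qx+x^{2}g(x)=0$ with $g(x)=qx$, which are of the required form. Second, the dichotomy ``unital, hence a product of fields'' versus ``no unity, hence nilpotent'' is false even for finite commutative rings: $N_{0,2}\oplus\mathbb{Z}_{2}$ is monogenic (generated by $(a,1)$), has no unity, and is not nilpotent; moreover it lies in $\mathfrak{L}_{2}\vee var~\mathbb{Z}_{2}$, a variety satisfying the hypothesis by Proposition~\ref{pr4}, so this mixed case genuinely occurs and your argument never treats it. The correct structural statement is a Peirce-type decomposition into a unital ideal plus a nilpotent ideal, which would force you to handle both parts of an element simultaneously.

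Both gaps are avoidable, and the repair collapses your argument into essentially the paper's proof. Observe that $\beta_{2j}\le\beta_{j}$ always, since $q^{\beta_{j}}a_{q}^{2j}=\bigl(q^{\beta_{j}}a_{q}^{j}\bigr)a_{q}^{j}=0$. Feeding this into your own recursion at $j=1$ gives $\beta_{1}\le\lceil\beta_{1}/2\rceil+1$, which forces $\beta_{1}\le 3$ outright --- no finiteness, no nilpotency, no case split on unity, and no appeal to Proposition~\ref{pr2}(3) is needed; only Proposition~\ref{pr2}(4) is used. This self-referential squeeze is precisely what the paper does, carried out on the $q_{i}$-primary component $A_{i}$ of the one-generated free ring of $\mathfrak{M}$: there $\bigl(q_{i}^{[\beta_{i}/2]+1}A_{i}\bigr)^{2}=(0)$, so an element of additive order $q_{i}^{2}$ in this zero-multiplication subring would yield a copy of $N_{0,q_{i}^{2}}$, contradicting Proposition~\ref{pr2}(4); hence $q_{i}^{[\beta_{i}/2]+2}A_{i}=(0)$, giving $\beta_{i}\le[\beta_{i}/2]+2$ and so $\beta_{i}\le 3$ (with a parity refinement in the even case). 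Your element-by-element version of this inequality is sound; it is only the detour through finiteness and nilpotency that is broken.
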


\begin{proof} From Proposition~\ref{pr7}, $T(\mathfrak{M})$ contains an identity of the form~$mx$ for some integer~$m$. Let $F$ be the one-generated free ring in $\mathfrak{M}.$ Suppose ${m=q_1^{\beta_1} q_2^{\beta_2}\ldots q_t^{\beta_t},}$ where $q_1,q_2,\ldots ,q_t$ are prime numbers such that $q_i\neq q_j$ for $i\neq j.$ Therefore $F=$ $=\oplus_{i=1}^{t}A_i,$ where $A_i$ is a ideal of $F$ and $q_i^{\beta_i}A_i=(0)$ for $i\leq t.$ 

We shall show that $\beta_i\leq 3$ for each $i.$  Assume the contrary. Then we can assume without loss of generality that $\beta_1\geq 4.$ Since $2\left[\dfrac{\beta_1}{2}\right]+2\geq \beta_1,$ we have $$\left(q_1^{\left[\frac{\beta_1}{2}\right]+1}A_1\right)^2=q_1^{2\left[\frac{\beta_1}{2}\right]+2}A_1^2=(0).$$ If the abelian group  $\langle q_1^{\left[\frac{\beta_1}{2}\right]+1}A_1, +\rangle$ contains a element of additive order $q_1^\delta$ for some $\delta\geq 2,$ then the ring $A_1$ has a subring $S$ such that $S\cong N_{0, q_1^2}.$ This contradicts Proposition~\ref{pr2}(4). Hence $q_1\left(q_1^{\left[\frac{\beta_1}{2}\right]+1}A_1\right)=(0)$ and $\left[\dfrac{\beta_1}{2}\right]+2\geq \beta_1.$ 
If $\beta_1=2a+1$ for some natural number $a$,  then we get $a+2\geq 2a+1.$ Hence $a\leq 1$ and $\beta_1\leq 3.$ Now assume that  $\beta_1=2a$ for some natural number $a$. Therefore  ${\left(q_1^aA_1\right)^2=(0)}$.   As before, it can be shown  that ${q_1\left(q_1^aA_1\right)=(0).}$ Thus ${a+1\geq \beta_1,}$ i.e. $a\leq 1$ and $\beta_1\leq 2.$ So we have proved that $\beta_1\leq 3.$ This contradiction concludes the proof.
\end{proof}

\begin{proposition}\label{pr9} Suppose $\mathfrak{M}$ is a variety of rings such that $\Gamma(R)\cong \Gamma(S)$ implies  $R\cong S$ for all finite rings $R,S\in \mathfrak{M}$. For any finite ring $R\in\mathfrak{M}$ there exist prime numbers $q_1,q_2,\ldots ,q_t$ such that $q_1^2 q_2^2\ldots q_t^2R=(0)$ and  $q_i\neq q_j$ for $i\neq j$.\end{proposition}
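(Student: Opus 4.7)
The goal is to show, for each prime $q$ dividing $|R|$, that the $q$-primary component $R_q$ of the additive group satisfies $q^2 R_q = (0)$. Proposition~\ref{pr8} furnishes an identity $mx \in T(\mathfrak{M})$ with $m = q_1^{\beta_1}\cdots q_t^{\beta_t}$, each $\beta_i \leq 3$, so that $mR = (0)$ and $R = \bigoplus_i R_{q_i}$ as a direct sum of two-sided ideals (the $q_i$-primary summands of $R^+$ are kernels of co-primary integers, hence are ideals). It therefore suffices to show that whenever $q^3 R_q = (0)$ one already has $q^2 R_q = (0)$.

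Assume to the contrary that some $a \in R_q$ has additive order exactly $q^3$, and let $B = \langle a\rangle \subseteq R_q$ be the (commutative) subring it generates. The strategy is to extract from $B$ a subring isomorphic to $N_{0,q^2}$, contradicting Proposition~\ref{pr2}(4). Examining the powers $a_k = a^{2^k}$, the central dichotomy is: whenever $a_{k-1}$ has additive order $q^3$ while $a_k$ has additive order at most $q^2$, the element $b = q a_{k-1}$ has additive order $q^2$ and satisfies $b^2 = q^2 a_k = 0$, so $\langle b\rangle \cong N_{0,q^2}$ and we are done. One may therefore assume $a^{2^k}$ has additive order $q^3$ for every $k \geq 0$; in particular, $a$ is not nilpotent in $B$.

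To rule out this remaining non-nilpotent case, I would exploit finiteness of $B$: the multiplicative semigroup $\{a^i : i \geq 1\}$ is eventually periodic, and a standard pigeonhole argument produces an idempotent $e = a^N \in B$ acting as identity on every $a^l$ with $l$ past the pre-period. Then $eB$ is a finite commutative unital ring lying in $\mathfrak{M}$, and by the Wedderburn structure theorem $eB \cong \prod_j L_j$ with each $L_j$ a local subring, again in $\mathfrak{M}$. Proposition~\ref{pr2}(3) forces each $L_j$ to be a field; since the characteristic of $L_j$ is a prime divisor of $q^3$, it equals $q$, whence $q\cdot eB = (0)$. Choosing $k$ with $2^k$ at least the pre-period puts $a^{2^k}\in eB$, so $q\cdot a^{2^k}=0$, contradicting the assumed additive order $q^3$.

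The principal obstacle is this second branch: the direct $N_{0,q^2}$-construction only yields a contradiction when the additive orders of the successive squares strictly drop, and one must invoke the idempotent/Wedderburn machinery to dispose of the case where they never do. The coordinated use of Propositions~\ref{pr2}(3) and~\ref{pr2}(4) — the first forcing each Wedderburn factor of $eB$ to be a field of the correct characteristic, the second forbidding additive order $\geq q^2$ on any square-zero element — is what pins down the bound from $\beta_i\leq 3$ down to $\beta_i\leq 2$ on the actual ring.
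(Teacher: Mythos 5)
Your proof is correct, and it takes a genuinely different route from the paper's. The paper stays with the identity machinery: from Proposition~\ref{pr7} it has $dx+x^2g(x)\in T(\mathfrak{M})$ and splits on whether $q_1\mid d$. If not, the component $R_1$ satisfies $x+x^2h(x)=0$ and is elementary abelian; if so, evaluating $q_1x+x^2g_1(x)=0$ at $q_1a$ kills nilpotent elements (since $q_1^2a(1+ag_2(a))=0$ forces $q_1^2a=0$), and when $R_1\neq J(R_1)$ the paper lifts the unity of $R_1/J(R_1)$ to an idempotent $e$, applies the Peirce decomposition and Wilson's theorem to $eR_1e$, rules out matrix blocks of size $\geq 2$ by Corollary~\ref{c2}, and makes the remaining Galois rings fields by Proposition~\ref{pr2}(3), so that $q_1e=0$. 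You avoid Proposition~\ref{pr7}, Wilson's theorem, the Peirce decomposition, and Corollary~\ref{c2} entirely, arguing element-by-element inside the commutative subring $\langle a\rangle$: your iterated-squares dichotomy either produces a square-zero element of additive order $q^2$, i.e.\ a copy of $N_{0,q^2}$ excluded by Proposition~\ref{pr2}(4), or leaves $a$ non-nilpotent, in which case the idempotent power of $a$ generates a finite commutative unital subring which, decomposed into local rings, is by Proposition~\ref{pr2}(3) a product of fields of characteristic $q$ --- contradicting the assumed additive order $q^3$. (Calling that decomposition ``the Wedderburn structure theorem'' is a misnomer; what you need is the standard splitting of a finite commutative unital ring into local rings, which is true and standard, so this is only a naming quibble.) Your route buys elementarity and self-containment: only Proposition~\ref{pr8}, Proposition~\ref{pr2}(3)--(4), finite commutative ring theory, and cyclic-semigroup facts are used, with no noncommutative structure theory. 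The paper's route buys by-product structural information (e.g.\ $q_1R_1=(0)$ with $R_1$ elementary abelian when $q_1\nmid d$) and keeps the identity-theoretic toolkit in the foreground, which is then reused in the proofs of Theorems~\ref{th1}--\ref{th3}.
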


\begin{proof} It follows from Propositions~\ref{pr7} and~\ref{pr8} that  $T(\mathfrak{M})$  contains the polynomials $q_1^{\beta_1} q_2^{\beta_2}\ldots q_t^{\beta_t}x$ and $ q_{i_1} q_{i_2}\ldots q_{i_l} x+ x^2 g(x),$ where $g(x)\in\mathbb{Z}[x],$ $q_1,q_2,\ldots ,q_t$ are prime numbers, $q_i\neq q_j$ for $i\neq j$, and   $\beta_i\leq 3$ for each $i$. Therefore  $q_1^3 q_2^3\ldots q_t^3R=(0).$ We see that $R=\oplus_{i=1}^{t}R_i,$ where $q_i^3R_i=(0)$ for all $i\leq t.$ Now let us prove that $q_1^2R_1=(0).$ 

Assume that $q_1\notin\{q_{i_1}, q_{i_2},\ldots, q_{i_l}\}$. In this case, there exist integers $a,b$ such that $q_1^3a+ q_{i_1} q_{i_2}\ldots q_{i_l}b=1.$ Hence $R_1$ satisfies the identity $$q_1^3ax+  q_{i_1} q_{i_2}\ldots q_{i_l}bx+bx^2g(x)=x+x^2h(x)=0.$$ Thus ${R_1\cong \mathbb{Z}_{q_1}\oplus\ldots\oplus \mathbb{Z}_{q_1}}$ and $q_1R_1=(0)$ (see \cite{Jac}).  

Now assume that $q_1\in\{ q_{i_1}, q_{i_2},\ldots, q_{i_l} \}$. In the same way, it can be proved that $R_1$ satisfies a identity $f_1(x)=q_1x+x^2g_1(x)=0,$ where $g_1(x)\in \mathbb{Z}[x].$ For any nilpotent element $a\in R_1$ it follows that $$0=f_1(q_1a)=q_1^2a+q_1^2a^2g_2(a)=q_1^2a(1+ag_2(a)),$$ where $g_2(a)=g_1(q_1a).$ Since the element $a$ is nilpotent,  we get $q_1^2a=0.$ So for the case $R_1=J(R_1),$ we have $q_1^2R_1=(0).$ Now we can assume that ${R_1\neq J(R_1).}$ In this case, there exists a nonzero idempotent  $e\in R_1$ such that $e+J(R)$ is a unity in  the factor-ring $R_1/J(R_1)$ (see \cite[p. 80, 94]{E}). Therefore $$R_1=eR_1e\stackrel{.}{+} eR_1(1-e)\stackrel{.}{+} (1-e)R_1e\stackrel{.}{+} (1-e)R_1(1-e)$$ (see \cite[p. 32]{E}). Also, note that $eR_1(1-e)\stackrel{.}{+} (1-e)R_1e\stackrel{.}{+} (1-e)R_1(1-e)\subseteq J(R_1)$ and ${q_1^2J(R_1)=(0)}.$ We shall show that $q_1^2e=0.$ By Wilson's theorem (see \cite{W}), it follows that $eR_1e=Q\stackrel{.}{+} N,$ where $Q$ is a direct sum of matrix rings over Galois rings,  $N$ is a $(Q,Q)$-bimodule such that $N\subseteq J(R_1)$. Let $Q=\oplus_{i=1}^{m}M_{k_i}(S_i),$ where $S_i$ is a Galois ring for all $i\leq m.$ Assume that $k_1\geq 2.$ In this case, the variety~$\mathfrak{M}$ contains the ring~$A_{q_1}.$ This contradicts Corollary~\ref{c2}. Therefore $k_1=1.$ Similarly, it can be proved that ${k_2=\ldots=k_m=1.}$ This means that $Q=\oplus_{i=1}^{m}S_i.$ It is known that every Galois ring is local. From Proposition~\ref{pr2}(3), we have that  $S_i$ is a field for each $i$. Thus $q_1Q=(0).$ It implies that $q_1e=0.$ So  $q_1^2R_1=(0)$. In the same way, we can prove that $q_i^2R_i=(0)$ for $i\geq 2.$ It shows that $q_1^2 q_2^2\ldots q_t^2R=(0).$ This completes the proof.
\end{proof}

\section{The proofs of main results}

Now we are in a position to prove our main theorems.

\vspace{0.5cm}

\textbf{The proof of Theorem~\ref{th1}. }

Suppose $\mathfrak{M}\subseteq\mathfrak{L}_{p_1,\ldots, p_s}\vee var~\mathbb{Z}_p$  and $(p_i,p)\neq(3,2)$ for each $i\geq 1$. From Proposition~\ref{pr4}, $\Gamma(R)\cong \Gamma(S)$ implies  $R\cong S$ for all finite rings $R,S\in \mathfrak{M}$. Moreover, $x(1-x^{p-1})y\in T(\mathfrak{M}),$ i.e.  $\mathfrak{M}$ satisfies the identity $xy-x^py=0.$

Conversely, suppose $\Gamma(R)\cong \Gamma(S)$ implies  $R\cong S$ for all finite rings ${R,S\in \mathfrak{M}}$ and  $xy+f(x,y)\in T(\mathfrak{M})$, where the lower degree of $f(x,y)$ is greater then~$2$. By Proposition~\ref{pr7}, we have that $T(\mathfrak{M})$ contains a polynomials $mx, q_1 q_2\ldots q_l x+ x^2 g(x),$ where $m\in\mathbb{N},$ $g(x)\in \mathbb{Z}[x]$, $q_1,q_2,\ldots ,q_l$ are prime numbers such that $q_i\neq q_j$ for $i\neq j.$ From Lvov's theorem (see \cite{LvovI}), $\mathfrak{M}$ is a Cross variety. Therefore it is generated by its critical rings. 

Consider a critical ring $R\in\mathfrak{M}$. From Propositions~\ref{pr7} and~\ref{pr9}, the ring~$R$ satisfies a identities $q_1 q_2\ldots q_l x+ x^2 g(x)$ and $q^2x=0,$ where $g(x)\in \mathbb{Z}[x]$, $q, q_1, q_2, \ldots ,q_l$ are prime numbers such that $q_i\neq q_j$ for $i\neq j.$  Hence for some $h(x)\in \mathbb{Z}[x]$  either  $x+x^2h(x)\in T(R),$  or $qx+x^2h(x)\in T(R)$.  In the first case, we have $R\cong \mathbb{Z}_{q}$ (see~\cite{Jac}). Now we can assume that the ring~$R$ satisfies the identity $qx+x^2h(x)=0.$ Let us consider the following cases.

\textbf{Case 1:} $R=J(R).$ In this case, from the identity $xy+f(x,y)=0$, we get ${R^2=(0).}$ Since $qx+x^2h(x)=0$ is a identity of $R$, we have $qx=0$ for each $x\in R.$ Thus $R\in var~N_{0,q}.$

\textbf{Case 2:}  $J(R)=(0).$ From the Wedderburn~-- Artin theorem (see \cite[p. 80]{E}) and Corollary~\ref{c2}, it follows that $R\cong \mathbb{Z}_q.$ Thus $R\in var~\mathbb{Z}_q.$

\textbf{Case 3:} $(0)\neq J(R) \neq R.$  As above (see Case~1), we have ${qJ(R)=(0).}$ Let $e^2=e$ be an idempotent of the ring $R$ such that $e+J(R)$ is a unity in the factor-ring $R/J(R).$ As before (see the proof of Proposition~\ref{pr9}), we have $qe=0$. So the ring  $R$ is a $\mathbb{Z}_q$~- algebra. It means that $R$ is isomorphic to one of the following algebras: $\left( \begin{array}[c]{cc} GF(q_1) & GF(q_2)  \\ 0 & 0 \end{array} \right),$  $\left( \begin{array}[c]{cc} GF(q_1) & 0  \\ GF(q_2) & 0 \end{array} \right)$, ${\left\{\left( \begin{array}[c]{cc} a & b  \\ 0 & \sigma(a) \end{array} \right);a,b\in GF(q^n)\right\}},$ $\left( \begin{array}[c]{cc} GF(q_1) & GF(q_3)  \\  0& GF(q_2) \end{array} \right)$, where  $GF(q_1)\subseteq GF(q_3), $ $GF(q_2)\subseteq GF(q_3)$ and $\sigma$ is an automorphism  of the field $GF(q^n)$ such that $\sigma\neq 1$. Hence $\mathfrak{M}$ contains either $A_q $ ($A_q^0$) or a local ring. This contradicts Proposition~\ref{pr2}(3) and Corollary~\ref{c2}. Thus Case~3 is impossible.

Theorem~\ref{th1} is proved.$\Box$

\vspace{0.5cm}

\textbf{The proof of Theorem~\ref{th2}. }

Let $\mathfrak{M}$ be a variety of rings such that the following conditions hold: (i)~${\Gamma(R_1)\cong \Gamma(R_2)}$ implies  $R_1\cong R_2$ for all finite rings $R_1,R_2\in \mathfrak{M}$; (ii)~$\mathbb{Z}_p\in\mathfrak{M}$ for some prime number $p$. Consider a subdirectly irreducible finite ring $R$ in $\mathfrak{M}$.  From Theorem of~\cite{semr}, we have either $R\cong \mathbb{Z}_p,$ or $R=J(R).$ If $R=J(R),$ then, from Proposition~\ref{pr9}, it follows that $q^2R=(0)$ for some prime number $q$. 

The theorem is proved. $\Box$

\vspace{0.5cm}

\textbf{The proof of Theorem~\ref{th3}. }

Let $\mathfrak{M}$ be a variety of associative rings such  that the following conditions hold: (i)~$\mathbb{Z}_2\in \mathfrak{M}$; (ii)~$\Gamma(R_1)\cong\Gamma(R_2)$ implies $R_1\cong R_2$ for all finite rings $R_1,R_2\in \mathfrak{M}.$  Suppose  $R\in \mathfrak{M}$ is a subdirectly irreducible finite nonzero ring  of order $2^t$. From Theorem~\ref{th2}, it follows that either $R\cong \mathbb{Z}_2,$ or $R^n=(0)$  and $p^2R=(0)$ for some numbers $n>1$ and $p$ ($p$ is prime). If $R\cong \mathbb{Z}_2,$ then the proof is trivial. Assume that $R^n=(0)$ and $p^2R=(0)$. Since $\left|R\right|=2^t,$ we have $p=2$ and $4R=(0).$

We shall show that $N_4\notin\mathfrak{M}.$  Assume the contrary. Then $N_4\in\mathfrak{M}$. Therefore $N_{0,2}\oplus \mathbb{Z}_2 \in \mathfrak{M}.$ From Proposition~\ref{pr5}, we have $\Gamma(N_4)=\Gamma(N_{0,2}\oplus \mathbb{Z}_2).$ By assumption, $\Gamma(R_1)\cong\Gamma(R_2)$ implies $R_1\cong R_2$ for all finite rings $R_1,R_2\in \mathfrak{M}.$ So we have a contradiction. Hence $N_4\notin\mathfrak{M}.$ This yields that there exists a polynomial $f(x_1, \ldots, x_d)$ such that $f(x_1, \ldots, x_d)$ is essentially depending on $x_1,x_2,\ldots, x_d$ and ${f(x_1, \ldots, x_d)\in T(\mathfrak{M})\setminus T(N_4).}$ We note that $T(N_4)=\{xyz, 4x, 2xy, 2x+x^2\}^T$. Therefore $d\leq 2.$ Let us consider two cases.

\textbf{Case 1:} $d=2.$ 

We can assume that the polynomial $f(x,y)$ has a form $$f(x,y)=xy+\alpha[x,y]+2\psi(x,y)+\varphi(x,y),$$ where $\alpha\in\mathbb{Z},$ $\psi(x,y), \varphi(x,y)\in\mathbb{Z}\left\langle x,y\right\rangle$ and the lower degree of $\varphi(x,y)$ is greater then $2$.  Substituting $y$ for $x$ in  $f(x,y)$, we obtain $f(x,x)=x^2+2\beta x^2+x^3\varphi_1(x)$ for some $\varphi_1(x)\in\mathbb{Z}[x].$ Clearly, $x^2+2\beta x^2+x^3\varphi_1(x)\in T(\mathfrak{M}).$  So the ring $R$ satisfies the identities $4x=0$ and $(1+2\beta) x^2+x^3\varphi_1(x)=0.$ Further, there exist integers $u,v$ such that $(1+2\beta)u+4v=1.$ Therefore the ring $R$ satisfies $x^2=x^3\varphi_2(x)$ for some $\varphi_2(x)\in\mathbb{Z}[x].$  Since $R^n=(0),$   $x^2=0$ for each $x\in R$. From proposition~\ref{pr7}, it follows that $T(\mathfrak{M})$ contains a polynomial of the form $$q_1q_2\ldots q_sx+x^2g(x),$$ where $g(x)\in\mathbb{Z}[x]$, $q_1,q_2,\ldots, q_s$ are prime numbers, and $q_1\neq q_j$ for $i\neq j.$ Assume that the numbers $q_1,q_2,\ldots, q_s$ are odd. In this case, there exist integers $q,t$ such that $(q_1q_2\ldots q_s)q+4t=1.$ Since the ring $R$ satisfies $q_1q_2\ldots q_sx+x^2g(x)=0$ and $4x=0,$ the $T$-ideal $T(R)$ contains a polynomial $x-x^2g_1(x)$ for some $g_1(x)\in\mathbb{Z}[x].$ It is clear that $R=(0)$. We have a contradiction.  Thus $q_i=2$ for some $i.$  As above, it can proved that  $R$ satisfies $2x+x^2g_2(x)=0$ for some $g_2(x)\in\mathbb{Z}[x].$ Since  $x^2=0$ for every element $x\in R$, the polynomial $2x$ belongs to $T(R).$  Since $x^2=0$ for each $x\in R,$ it is easily shown that the ring $R$ satisfies the identity $xy+yx=0.$ We know that $a=-a$ for each $a\in R$. Therefore $xy-yx=0$ for all $x,y\in R,$ i.e. $R$ is commutative. So $R$ satisfies $x^2=0,$ $xy=yx,$ $x_1 \ldots x_n=0,$ and $2x=0.$

\textbf{Case 2:} $d=1.$ 

In this case, we can assume that the polynomial $f(x,y)$ has a form $$f(x)=\alpha x+\beta x^2+x^3 f_1(x),$$ where $\alpha,\beta\in\mathbb{Z}$ and $f_1(x)\in\mathbb{Z}[x].$ Assume that $\alpha$ is odd. In this case, the ring~$R$ satisfies $x=x^2h(x)$ for some $h(x)\in\mathbb{Z}[x].$ Since $R$ is a nilpotent ring, we see that $R=(0).$ We have a contradiction. Therefore $\alpha$ is even.  Let $\alpha=2m$ for some $m\in\mathbb{N}.$ Hence  \begin{equation}\label{ide} f(x)=2mx+\beta x^2+x^3 f_1(x)=\gamma x^2+m(x^2+2x)+x^3f_1(x),\end{equation} where $\gamma=\beta-m.$ Clearly, $f(x)\in T(N_4)$ whenever   $\gamma$ is even. Since  ${f(x)\not\in T(N_4)}$, it follows that $\gamma$ is odd. Hence there exist integers  $a,b$ such that $\gamma a+4b=1.$ Combining the identities $4x=0$ and~(\ref{ide}),  we see that the ring $R$ satisfies \begin{equation}\label{id} x^2+m_1(x^2+2x)+x^3\mu(x)=0,\end{equation} where $m_1\in \mathbb{Z}$ and  $\mu(x)\in \mathbb{Z}[x].$ 

Assume that $m_1$ is even.  Then  we have $$0=2x^2+2m_1(x^2+2x)+2x^3\mu(x)=2x^2(1+x\mu(x)),$$ because $4x=0$ for each $x\in R.$ Since $R$ is nilpotent, $2x^2=0$  is a identity of $R$.  From~(\ref{id}), it follows that $x^2+x^3\mu(x)=0$ also is a identity of $R$. Hence $x^2=0$ for each $x\in R.$ As above, using Proposition~\ref{pr7}, we can proved that  $R$ satisfies a identity $2x+x^2g_2(x)=0$ for some $g_2(x)\in\mathbb{Z}[x].$ This implies that $2x=0$ for any $x\in R$. So we have proved that  $R$ satisfies $x^2=0,$ $xy=-yx=yx,$ $x_1 \ldots x_n=0,$ and $2x=0$ whenever $m_1$ is even. 

 Now assume that $m_1$ is odd. Let $m=2q+1,$ where $q\in\mathbb{N}$.  In this case,  $T(R)$ contains the identity $$x^2+(2q+1)(x^2+2x)+x^3\mu(x)=0.$$ This identity can be represented in the form $$2(q+1)x^2+2x+x^3\mu(x)=0$$ since $4x=0$ for each $x\in R$. Hence, \begin{equation}\label{id3}2x=-x^3\mu(x)(1+q_1x)^{-1}\end{equation} for each $x\in R$, where $q_1=q+1$. By Corollary~\ref{c3}, we have $N_{2,2}\not\in\mathfrak{M}.$ Consequenly there exists a polynomial $F(x_1,\ldots,x_w)\in T(\mathfrak{M})\setminus T(N_{2,2})$ essentially depending  on $x_1,\ldots,x_w.$ Obviously, $w\leq 2.$ Assume that  $w=2.$ In this case, $F(x,y)$ can be represented in the form $$F(x,y)=xy+\alpha [x,y]+2 \Phi(x,y)+\Psi(x,y),$$  where $\Phi(x,y),\Psi(x,y)\in \mathbb{Z}\left\langle x,y\right\rangle$  and the lower degree of  $\Psi(x,y)$ is greater then~$2$. From~(\ref{id3}), it follows that $$2\Phi(x,y)=-\Phi(x,y)^3\mu(\Phi(x,y))(1+q_1\Phi(x,y))^{-1}.$$  In other words, the lower degree of $2\Phi(x,y)$ is greater then~$2$. Thus $F(x,y)$ can be represented in the form $$F(x,y)=xy+\alpha [x,y]+\Psi'(x,y),$$ where $\Psi'(x,y)\in \mathbb{Z}\left\langle x,y\right\rangle$ and the lower degree of  $\Psi'(x,y)$ is greater then~$2$. Substituting $y$ for $x$ in the identity $F(x,y)=0$, we obtain a identity $x^2=x^3\omega(x)$ for some $\omega(x)\in \mathbb{Z}[x].$ Since $R$ is nilpotent, $x^2=0$ for every $x\in R.$ From~(\ref{id3}),  we get the identity $2x=0$. So $R$ satisfies $x^2=0,$ $xy=yx,$ $x_1 \ldots x_n=0,$ $2x=0$ whenever $w=2.$ Now let us consider the case $w=1.$ The polynomial $F(x,y)$ can be represented in the form $$F(x)=\alpha x+\beta x^2+2 \lambda(x)+x^3p(x),$$ where $\lambda(x),p(x)\in \mathbb{Z}[x],$ $\alpha,\beta\in\{0,1\}$, and one of the numbers $\alpha,\beta$ is not equal to zero. If $\alpha=1$ then $R$ satisfies some identity of the form $$(1+2k)x+x^2\lambda'(x)=0,$$ where $\lambda'(x)\in \mathbb{Z}[x].$ This means that $R=(0).$ We have a contradiction. Therefore $\alpha=0$ and $\beta=1.$ Hence,   $$F(x)=x^2+2\lambda(x)+x^3p(x).$$   Multiplying $F(x)$ by $2$, we get the identity $$2x^2(1+xp(x))=0.$$ We see that $2x^2=0.$ Let $\lambda(x)=a_1x+a_2x^2+\ldots+a_Nx^N$, where $N, a_1, \ldots, a_n$ are some integers. Consequently, $$x^2+2a_1x+x^3p(x)=0$$ is a identity of $R$. If $a_1$ is even then $R$ satisfies $x^2+x^3p(x)=0.$ In this case, $x^2=0,$ $xy=yx,$ $x_1 \ldots x_n=0,$ $2x=0$ are identities of $R$. Now assume that  $a_1$ is odd.  Then $$x^2+2x+x^3p(x)=0$$ for each $x\in R$. From the identity~(\ref{id3}), it follows that $R$ satisfies some identity of the form  $x^2+x^3p_1(x)=0.$ Thus $x^2=0,$ $xy=yx,$ $x_1 \ldots x_n=0,$ $2x=0$ are identities of $R$. 

This completes the proof of Theorem~\ref{th3}.$\Box$

\end{document}